\theoremstyle{thmstyleone}%
\newtheorem{theorem}{Theorem}[section]
\newtheorem{proposition}[theorem]{Proposition}%
\newtheorem{assumption}{Assumption}[section]
\newtheorem{lemma}{Lemma}[section]
\theoremstyle{thmstyletwo}%
\newtheorem{remark}{Remark}[section]%
\theoremstyle{thmstylethree}%
\begin{document}

\title[Article Title]{1/2 order convergence rate of Euler-type methods for time-changed stochastic differential equations with super-linearly growing drift and diffusion coefficients}


\author[1,2]{\fnm{Shuai} \sur{Wang}}\email{swang767@connect.hkust-gz.edu.cn}

\author*[1]{\fnm{Yuanling}  \sur{Niu}}\email{yuanlingniu@csu.edu.cn}
\author[2]{\fnm{Ying} \sur{Zhang}}\email{yingzhang@hkust-gz.edu.cn}
\affil[1]{\orgdiv{School of Mathematics and Statistics}, \orgname{Central South University}, \orgaddress{\street{No.932 South Lushan Road}, \city{Changsha}, \postcode{410083}, \country{China}}}

\affil[2]{\orgdiv{Financial Technology Thrust, Society Hub}, \orgname{The Hong Kong University of Science and Technology (Guangzhou)}, \orgaddress{\street{No.1 Duxue Road}, \city{Guangzhou}, \postcode{511453}, \country{China}}}

\abstract{This paper investigates the strong convergence properties of two Euler-type methods for a class of time-changed stochastic differential equations (TCSDEs)  with super-linearly growing drift and diffusion coefficients. Building upon existing research, we propose a backward Euler method (BEM)  and introduce  its  explicit counterpart---the projected Euler method (PEM). We prove that both methods converge strongly in the $L_2$-sense at the optimal rate of 1/2. This result extends the applicability of both the BEM and the PEM to a broader class of TCSDEs. Moreover, the two methods offer complementary strengths: while BEM possesses wide applicability, PEM is computationally more efficient. Numerical simulations confirm our theoretical findings and illustrate practical performance of both schemes.}

\keywords{Time-changed stochastic differential equations, Backward Euler method, Projected Euler method, Strong convergence, Super-linear coefficients}


\pacs[MSC Classification]{60H10 $\cdot$ 65C30 $\cdot$ 60J60}

\maketitle
\section{Introduction}
Stochastic differential equations (SDEs) are widely used to model diverse phenomena in physics, chemistry, finance, and engineering \cite{MR1926011,MR2069903,MR1783083}. As a specialized subclass of SDEs, time-changed stochastic differential equations (TCSDEs) replace the standard  temporal variable $t$ in classical SDEs with a time-changed process $E(t)$. This transformation extends their capability to model anomalous diffusion processes \cite{MR3623093,MR1882830,MR2841938}. In particular, when the time-changed process is chosen as the inverse of an $\alpha$-stable subordinator with $0<\alpha<1$, the probability density functions of the solutions to TCSDEs satisfy a fractional Fokker-Planck equations (FFPE). This FFPE, characterized by fractional time derivatives, governs the anomalous diffusion dynamics inherent to the time-changed process \cite{MR2783336,MR2736349,MR3753615}.

Since TCSDEs can be viewed as SDEs driven by semimartingales, \cite{MR2822482}~ established the result on the existence and uniqueness of solutions to TCSDEs using tools from semimartingale theory. Crucially, \cite[Theorem 4.2]{MR2822482}~derived a duality principle, which connects the TCSDE of the form 
\begin{equation}\label{fc0}
	\mathrm{d}X(t)=b(E(t),X(t))\mathrm{d}E(t)+g(E(t),X(t))\mathrm{d}W(E(t))
\end{equation}
with the classical SDE given by   
$$
\mathrm{d}Y(t)= b(t, Y(t))\mathrm{d}t+ g(t, Y(t)) \mathrm{d}W(t),
$$
where $X(0) =Y(0)$ and $\left\{W(t)\right\}_{t\geq0}$ is the standard Brownian motion.

Due to the presence of the time-changed process $E(t)$, obtaining analytic solutions to TCSDEs is a challenging task. We therefore consider to obtain their numerical approximations. For TCSDE~\eqref{fc0}, by leveraging the aforementioned duality principle, \cite{MR3593021}~established strong and weak convergence results for the Euler method under global Lipschitz conditions of the coefficients. Subsequently, \cite{MR4179718}~and \cite{MR4754794}~proposed respectively a backward Euler method (BEM) and a split-step theta method to approximate TCSDE~\eqref{fc0} and extended the convergence results in \cite{MR3593021} to the case with a super-linearly growing drift coefficient. All these works achieved  strong convergence rate of order $1/2$ for their proposed numerical methods. In \cite{MR4065764}, a truncated Euler method is proposed to approximate TCSDE~\eqref{fc0} with super-linearly growing drift and diffusion coefficients. A strong convergence result is established, but with the rate of convergence equal to ~$1/2 - \epsilon$ with~$\epsilon>0$. Furthermore, for TCSDEs that are  not of form~\eqref{fc0}, including, e.g, the time-changed McKean Vlasov SDEs and the TCSDEs given by
$$
\mathrm{d}X(t)=b(t,X(t))\mathrm{d}E(t)+g(t,X(t))\mathrm{d}W(E(t)),
$$
the duality principle is no longer applicable, hence, new approaches need to be considered to approximate TCSDEs in general forms. We refer  to~\cite{MR3958021,MR4292449,MR4556404, Liu2023AMM, MR4595993,MR4536712} for more details.

In this paper, we focus on TCSDE~\eqref{fc0}. Our goal is to approximate the solution of TCSDE~\eqref{fc0} with super-linearly growing drift and diffusion coefficients. To this end, we propose two numerical schemes, namely, BEM and projected Euler method (PEM), and establish their convergence results in the strong $L_2$-sense with the optimal rate of convergence equal to 1/2. To the best of the authors' knowledge, these are the first such results in the literature. We further conduct numerical experiments to demonstrate the effectiveness of the proposed methods and to highlight the distinct advantages of BEM and PEM: while BEM can be applied to a broader range of TCSDEs and has better behavior in stiff TCSDEs, PEM is more computationally efficient.

The rest of this paper is structured as follows. Section~\ref{sec2} introduces the relevant notation and necessary assumptions. Section~\ref{sec3} presents the approximation of the time-changed process, our proposed schemes, and their convergence results. Section~\ref{sec4} reports the numerical experiments validating our theoretical findings. Finally, Section~\ref{sec5} provides proofs supporting the main results in Sections~\ref{sec3}, while Appendices~\ref{app2} and~\ref{app1} contain proofs for the auxiliary results in Sections~\ref{sec4} and~\ref{sec5}.
\section{Preliminaries}
\label{sec2}
\subsection{Notation}
Throughout this paper, we work on a complete probability space~$(\Omega, \mathcal{F}, \mathbb{P})$ endowed with a filtration $\mathbb{F}=\left\{\mathcal{F}_t\right\}_{t \geqslant 0}$ satisfying the usual conditions. Let $\left\{W(t)\right\}_{t\geq0}$ be an $m$-dimensional $\mathbb{F}$-adapted standard Brownian motion defined on~$(\Omega, \mathcal{F}, \mathbb{P})$. Let $d,m>0$, we denote by $\langle x, y \rangle$ the inner product of vectors $x, y\in\mathbb{R}^d$ and denote by $|x|$ the Euclidean norm of $x\in\mathbb{R}^d$. For any $a,b\in\mathbb{R}$, we denote by $a \wedge b$ and $a \vee b$ their minimum and maximum, respectively. We denote by~$\mathbb{N}$ the set of all positive integers and $\mathbb{N}_0:=\mathbb{N}\cup\{{0}\}$. For a matrix $A\in\mathbb{R}^{m\times d}$, the trace norm is denoted by $|A|=\sqrt{trace(A^TA)}$ and we write $\|A\|$ to denote the operator norm of $A$.

Similar to \cite{MR4179718,MR3593021,MR4065764}, we denote by~$\left\{D(t)\right\}_{t\geq0}$ an~$\mathbb{F}$-adapted subordinator with Laplace exponent $\psi$ and L\'evy measure $\nu$. More precisely, $\left\{D(t)\right\}_{t\geq0}$ is a one-dimensional nondecreasing L\'evy process with c\`adl\`ag paths starting at zero. Its Laplace transform is given by
$$
\mathbb{E}\left[e^{-s D(t)}\right]=e^{-t \psi(s)},
$$
where $s>0$, $\psi(s)=a s+\int_0^{\infty}\left(1-e^{-s x}\right) \nu(\mathrm{d} x)$  with $a \geqslant 0$, and~$\int_0^{\infty}(x \wedge 1) \nu(d x)<\infty$. We assume that the L\'evy measure is infinite, i.e., $\nu(0, \infty)=\infty$, which implies that $\left\{D(t)\right\}_{t\geq0}$ has strictly increasing paths with infinitely many jumps. Denote by~$\left\{E(t)\right\}_{t\geq0}$ the inverse subordinator of~$\left\{D(t)\right\}_{t\geq0}$, which is defined as 
$$
	E(t):=\inf \{u>0 ; D(u)>t\},
$$
for all~$t\geq0$, which has continuous and nondecreasing paths \cite{MR2822482}. We always assume that~$\left\{D(t)\right\}_{t\geq0}$ and $\left\{W(t)\right\}_{t\geq0}$ are independent. The process $\left\{W(E(t))\right\}_{t\geq0}$ is usually called a time-changed Brownian motion. 
\subsection{Setting and assumptions}
We consider the TCSDE $\{X_t\}_{t\geq0}$ given by 
\begin{equation}\label{fc}
	\mathrm{d}X(t)=b(E(t),X(t))\mathrm{d}E(t)+g(E(t),X(t))\mathrm{d}W(E(t)),
\end{equation}
where the initial value $X(0)$ has a finite $p$-th moment with $p>0$, $b:[0,\infty)\times\mathbb{R}^d \to\mathbb{R}^d$ and $g:[0,\infty)\times\mathbb{R}^d \to\mathbb{R}^{d\times m}$ are measurable functions, and $\left\{W(t)\right\}_{t\geq0}$ is an $m$-dimensional standard Brownian motion.
Then, we impose the following assumptions on the coefficients of TCSDE \eqref{fc}
\begin{assumption}\label{A1}
	There exist  $K>0$ and $\gamma>1$ such that, for all $x, y \in \mathbb{R}^d$ and $t \in[0,\infty)$,
	$$		
	|b(t, x)-b(t, y)| \leq K(1+|x|^{\gamma-1}+|y|^{\gamma-1})|x-y|,\\
	$$
	and $$ |b (t, 0)|\vee|g (t, 0)|\leq K.$$ 
\end{assumption}
\begin{assumption}\label{A2}
	There exist  $K_1>0$ and $\eta>1$ such that, for all $x, y \in \mathbb{R}^d$ and $t \in[0,\infty)$,
	$$
	\left<b(t, x)-b(t, y),x-y\right>+\frac{\eta-1}{2}|g(t, x)-g(t, y)|^2 \leq 
	K_1|x-y|^2 .
	$$
\end{assumption}
\begin{assumption}\label{A3}
	There exist  $K>0$ and $p^\star>1 	$ such that, for all $x\in \mathbb{R}^d$ and $ t \in[0,\infty)$,
	$$
	\left<b(t, x),x\right>+\frac{p^\star-1}{2}|g(t, x)|^2 \leq 
	K(1+|x|^2).
	$$
\end{assumption}
\begin{assumption}\label{A4}
	There exist  $K>0$ and $\gamma>1$ such that, for all $x \in \mathbb{R}^d$ and $s,t \in[0,\infty)$,
	$$
	|b(t, x)-b(s, x)|\vee|g(t, x)-g(s, x)| \leq K(1+|x|^\gamma)|t-s|^\frac{1}{2}.
	$$
\end{assumption}
\begin{remark}\label{re2.1}
	By Assumptions \ref{A1} and \ref{A2}, we deduce that, for all $x, y \in \mathbb{R}^d$ and $ t \in[0,\infty)$,
	$$
	|b(t, x)| \leq K(1+|x|^\gamma) ,\\
	$$
	$$
	|g(t, x)-g(t, y)|^2 \leq K(1+|x|^{\gamma-1}+|y|^{\gamma-1})|x-y|^2 , \\
	$$
	$$
	|g(t, x)| \leq K(1+|x|^{\frac{\gamma+1}{2}}) ,\\
	$$
	where $K>0$.
\end{remark}
\begin{remark}
	Under Assumptions \ref{A1}-\ref{A4}, TCSDE \eqref{fc} admits a unique strong  solution, see \cite[Lemma 4.1]{MR2822482} and \cite[Chapter V]{MR2020294}.
\end{remark}
According to \cite[Theorem 4.2]{MR2822482}, there is a relationship between TCSDE \eqref{fc} and the classical SDE given by
\begin{equation}\label{do}
	\mathrm{d}Y(t)= b(t, Y(t))\mathrm{d}t+ g(t, Y(t)) \mathrm{d}W(t),
\end{equation} 
with $Y(0)=X(0)$. This relationship is referred as the duality principle described below, which plays a central role in our analysis. 
\begin{lemma}(Duality principle)\label{le2.3}
	Under Assumptions \ref{A1}-\ref{A4}, if $\{Y (t)\}_{t\geq0}$ is the unique  strong solution to SDE \eqref{do}, then the time-changed process $\{Y (E(t))\}_{t\geq0}$ is the unique strong solution to TCSDE \eqref{fc}. Conversely, if $\{X (t)\}_{t\geq0}$ is the unique strong solution to TCSDE \eqref{fc}, then the process $\{X (D(t))\}_{t\geq0}$ is the unique strong solution to SDE \eqref{do}.
\end{lemma}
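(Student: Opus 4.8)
\textbf{Proof proposal for Lemma \ref{le2.3} (Duality principle).}

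The plan is to reduce this statement to the semimartingale duality result already available in Kobayashi \cite{MR2822482} (Theorem 4.2 / Lemma 4.1 there). The crucial point is that the statement as quoted is essentially a transcription of Kobayashi's theorem, except that here we must check that Assumptions \ref{A1}--\ref{A4}, together with the moment bound $\mathbb{E}[|X(0)|^p]<\infty$ for all $p>0$, put us in the setting where that theorem applies, namely: (i) the classical SDE \eqref{do} has a unique strong solution $Y(t)$ with the required adaptedness and moment properties, and (ii) the time-change $E(t)$ is an inverse subordinator independent of $W$, so that the pathwise change-of-variables formula is legitimate. Step (i) is handled by the standard existence-and-uniqueness theory for SDEs with one-sided Lipschitz (monotone) drift and locally Lipschitz diffusion of super-linear growth: Assumption \ref{A2} (with the appropriate $\eta$) gives the monotonicity/coercivity-type condition on $b$ together with the diffusion, Assumption \ref{A3} gives the It\^o coercivity needed for $L^p$-moment bounds, Assumption \ref{A1} gives local Lipschitz continuity of $b$, the Remark gives local Lipschitz continuity and polynomial growth of $g$, and Assumption \ref{A4} gives temporal H\"older continuity; these are exactly the hypotheses under which \eqref{do} is known to have a unique strong solution (see e.g. \cite{MR2020294}, Chapter V, or the references \cite{MR4065764,MR4179718} cited just above the lemma).

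The main body of the argument then follows Kobayashi's route. First I would record the key structural facts about $E$ and $D$: since $D$ is a subordinator with infinite L\'evy measure it has strictly increasing c\`adl\`ag paths, so $E=D^{-1}$ (in the sense $E(t)=\inf\{u>0: D(u)>t\}$) is continuous, nondecreasing, and for each fixed $t$ the random time $E(t)$ is an $(\mathcal{F}_u)$-stopping time; moreover $E(t)<\infty$ a.s.\ and $E(t)\to\infty$ as $t\to\infty$. Because $E(t)$ is a stopping time and $E$ has continuous paths, the time-changed filtration $\mathcal{G}_t:=\mathcal{F}_{E(t)}$ is well defined, and the independence of $D$ and $W$ ensures $W$ remains a martingale after this time change in the appropriate sense. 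The forward direction is then the assertion that if $Y$ solves \eqref{do} then $X(t):=Y(E(t))$ satisfies \eqref{fc}: one substitutes $t\mapsto E(t)$ into the integral form of \eqref{do} and invokes the change-of-variables formula for Lebesgue--Stieltjes integrals (for the drift term, giving $\int_0^{E(t)} b(r,Y(r))\,\mathrm{d}r=\int_0^t b(E(s),Y(E(s)))\,\mathrm{d}E(s)$) together with the corresponding substitution rule for stochastic integrals against a continuous martingale composed with a time change (giving $\int_0^{E(t)} g(r,Y(r))\,\mathrm{d}W(r)=\int_0^t g(E(s),Y(E(s)))\,\mathrm{d}W(E(s))$); this is precisely Kobayashi's Theorem 4.2. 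Uniqueness of $X$ as a strong solution of \eqref{fc} transfers from uniqueness of $Y$ because the map is invertible. For the converse, if $X$ solves \eqref{fc} then $Y(t):=X(D(t))$ solves \eqref{do}: here one uses that $E(D(t))=t$ for all $t\geq0$ (a consequence of $D$ having strictly increasing paths and no intervals of constancy), so composing \eqref{fc} with $D$ undoes the time change; the integrals transform back by the same change-of-variables formulas applied in the reverse direction, using $\mathrm{d}E(D(t))=\mathrm{d}t$ and $W(E(D(t)))=W(t)$.

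I expect the genuinely delicate point to be the justification of the stochastic change-of-variables formula for $\int g\,\mathrm{d}W$ under the time change, since the integrand $g(E(s),Y(E(s)))$ is only $(\mathcal{G}_s)$-adapted rather than $(\mathcal{F}_s)$-adapted and one must check the integrability condition $\int_0^t |g(E(s),Y(E(s)))|^2\,\mathrm{d}E(s)<\infty$ a.s.\ so that the time-changed stochastic integral is well defined. This is exactly where the polynomial growth bound \eqref{g} on $g$ and the uniform-in-$p$ moment bounds on $\sup_{r\le T}|Y(r)|$ (obtained from Assumption \ref{A3} via It\^o's formula and a Gronwall argument) are used, combined with $E(t)\le$ a finite random time; once these moment bounds are in hand the integrability follows from the fact that $E$ is bounded on compacts and the composition $r\mapsto Y(r)$ has finite moments of all orders. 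The remaining steps --- the Lebesgue--Stieltjes substitution for the drift and the algebraic identities $D(E(t)\!-\!)\le t\le D(E(t))$ and $E(D(t))=t$ --- are routine once the path properties of $D$ and $E$ are recorded, and I would simply cite \cite{MR2822482} (Lemma 4.1, Theorems 4.2--4.3) and \cite{MR2020294} for the precise statements rather than reproduce them.
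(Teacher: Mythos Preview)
Your proposal is correct and in fact goes well beyond what the paper does: the paper does not supply its own proof of this lemma at all, but simply records it as ``adapted from \cite{MR2822482}'' and, just before the statement, refers the reader to Lemma~4.1 of \cite{MR2822482} and Chapter~V of \cite{MR2020294} for existence and uniqueness. Your sketch---verifying that Assumptions \ref{A1}--\ref{A4} place us in the setting of Kobayashi's semimartingale duality theorem, and then outlining the change-of-variables argument for both the drift and stochastic integrals---is exactly the content behind that citation, so the approach is the same; you have simply unpacked what the paper leaves to the reference.
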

\section{Numerical schemes and main results}
\label{sec3}
To approximate the solution of TCSDE~\eqref{fc}, we apply the duality principle in Lemma~\ref{le2.3} and consider the numerical approximation of \(\{Y(E(t))\}_{t\geq0}\). This, however, involves the discretization of both \(\{Y(t)\}_{t\geq0}\) and \(\{E(t)\}_{t\geq0}\). In this section, we first describe the approach adopted to discretize \(\{E(t)\}_{t\in[0,T]}\) with a fixed $T>0$, and then introduce two numerical methods, i.e., BEM and PEM, to approximate \(\{Y(t)\}_{t\geq0}\). We establish, in Theorems~\ref{BEMT} and \ref{PEM1} below, the strong convergence results for the proposed schemes with the optimal rate of convergence equal to 1/2. We defer the proofs for the aforementioned results to Section \ref{sec5}.

\subsection{Approximation of time-changed process}
Fix $T>0$, to approximate the time-changed process $\left\{E(t)\right\}_{t\in[0,T]}$, we consider the discretization adapted from \cite{PhysRevE.82.011117,MR3593021}. For a fixed step size $h \in (0,1]$ and $i \in \mathbb{N}$, we define iteratively
\[
D(i h) := D\left((i-1)h\right) + Z_i, 
\]
where $\{Z_i\}_{i\in\mathbb{N}}$ is an i.i.d.\ sequence with $Z_i$ having the same distribution as $D(h)$. The procedure terminates at the smallest integer $N\in \mathbb{N}_0$ satisfying
\begin{align}\label{NandT}
	T \in [D(Nh), D((N+1)h)).
\end{align}
Then, we define, for any $t\in[0,T]$, that
\begin{equation}\label{Ed}
	E_h(t) := \Big( \min \big\{ n \in \mathbb{N}_0 : D(n h) > t \big\} - 1 \Big) h.
\end{equation}
This implies $E_h(t)=nh$, for any $t\in [D(nh), D(n+1)h)$, $n=0,\dots N$, and, in particular, $E_h(T)=Nh$.

\subsection{BEM with convergence result}

In this section, we adapt BEM, originally proposed in \cite{MR4179718}, to approximate TCSDE \eqref{fc} with super-linear drift and diffusion coefficients. Here, we follow the analytical framework of \cite{cai2024convergence, MR4132905} to extend the method.

Define a temporal grid $\{t_n = n h, n \in \mathbb{N}_0\}$ with $h\in(0,1]$. The BEM scheme for SDE \eqref{do} is given by, for any $n\in\mathbb{N}_0$,
\begin{equation}\label{BEM}
	Y_h^B(t_{n+1}) = Y_h^B(t_n) + b\left(t_{n+1}, Y_h^B(t_{n+1})\right)h + g\left(t_n, Y_h^B(t_n)\right)\Delta W_{n}
\end{equation}
with $Y_h^B(0) = Y(0)$ being an $\mathbb{R}^d$-valued random variable and $\Delta W_{n} := W(t_{n+1}) - W(t_n)$.
\begin{remark}
	Under Assumption \ref{A2}, the BEM scheme \eqref{BEM} is well-defined for all $h \in (0, 1/2K_1)$, see \cite{MR3608309}. 
\end{remark} 
By Lemma~\ref{le2.3}, our proposed BEM for TCSDE \eqref{fc}, denoted by $\left\{Y_h^B(E_h(t))\right\}_{t\in[0,T]}$, is defined using \eqref{Ed} and \eqref{BEM}, which is given by
\begin{equation}\label{BEMTCSDE}
	Y_h^B(E_h(t))=Y_h^B(t_n),
\end{equation}
where $t\in [D(nh), D(n+1)h)$ and $n\in\{0,1\cdots N\}$.
\begin{remark}
	Unlike \cite{MR4179718,MR3593021,MR4065764}, we do not introduce a continuous-time interpolation of BEM \eqref{BEM}. This is due to the fact that only~$\left\{Y_h^B(E_h(t))\right\}_{n\in\{0,1\cdots N\}}$ defined in \eqref{BEMTCSDE} is used in the convergence analysis, see, e.g. \eqref{RHS} in Section \ref{sec5.1} below. 
\end{remark}
\begin{remark}
	Throughout this paper,  we use $C$ to denote generic positive constants whose values may vary across different occasions, but are always independent of $h,t,n,$ and $T$.
\end{remark}

We provide below a strong convergence result for BEM \eqref{Ed}-\eqref{BEMTCSDE} with the optimal rate of convergence.
\begin{theorem}\label{BEMT}
	Suppose that Assumptions \ref{A1}-\ref{A4} hold with $\eta>2$, $p^\star \geq4\gamma-2$. Let $T>0$, and $h\in(0,1\wedge\frac{1}{4K_1}]$ be fixed. Then, BEM \eqref{Ed}-\eqref{BEMTCSDE} converges in $L_2$ to the exact solution of TCSDE \eqref{fc} with order 1/2, i.e., for all $t\in[0,T]$,
	$$
	\mathbb{E}\left[\left|X(t)-Y^B_h(E_h(t))\right|^2\right] \leq Che^{CT}.
	$$
\end{theorem}
\begin{remark}
	The rate of convergence obtained in Theorem \ref{BEMT}, as well as Theorem \ref{PEM1}, is optimal, since it is limited by the H\"{o}lder continuity of the exact solution which is inherently of order 1/2. We refer to Section~\ref{sec5} for more details.
\end{remark}
\subsection{PEM with convergence result}
In this section, we propose an alternative PEM method to approximate TCSDE \eqref{fc}, which is based on the framework developed in \cite{MR3493491}. 

\label{sec3.1}
Fixing $h\in(0,1]$, we define, for any $x\in\mathbb{R}^d$, 
\begin{equation}\label{xq}
	\begin{aligned}
		\kappa(x):=\min \left(1, h^{-\alpha}|x|^{-1}\right)x,
	\end{aligned}
\end{equation}
where $\alpha=\frac{1}{2(\gamma-1)}$. For a temporal grid $\{t_n = n h, n \in \mathbb{N}_0\}$, the PEM scheme for SDE \eqref{do} is given by, for any $n\in \mathbb{N}_0$,
\begin{align}\label{PEM}
	Y_h^P\left(t_{n+1}\right):= & \kappa(Y_h^P(t_n))+h b\left(t_n, \kappa(Y_h^P(t_n))\right) 
	+ g\left(t_n, \kappa(Y_h^P(t_n))\right)\Delta W_n
\end{align}
with $Y_h^P(0) = Y(0)$ being an $\mathbb{R}^d$-valued random variable. Consequently, by Lemma~\ref{le2.3}, PEM for TCSDE \eqref{fc} can be obtained using \eqref{Ed}, \eqref{xq}, \eqref{PEM}, which is given by
\begin{equation}\label{PEMTCSDE}
	Y_h^P(E_h(t))=Y_h^P(t_n),
\end{equation}
where $t\in [D(nh), D(n+1)h)$ and $n\in\{0,1\cdots N\}$.
\begin{theorem}\label{PEM1}
	Suppose Assumptions \ref{A1}-\ref{A4} hold with $\eta>3$, $p^\star \geq6\gamma-4$. Let $T>0$ and $h\in(0,1]$ be fixed. Then, PEM defined by \eqref{Ed}, \eqref{xq}-\eqref{PEMTCSDE} converges in $L_2$ to the exact solution of TCSDE \eqref{fc} with order 1/2, i.e., for all $t\in[0,T]$,
	$$
	\mathbb{E}\left[\left|X(t)-Y_h^P(E_h(t))\right|^2\right] \leq Che^{CT}.
	$$ 
\end{theorem}
\begin{remark}
	A comparison of Theorems \ref{BEMT} and \ref{PEM1} reveals that BEM imposes weaker conditions on $\eta$ and $p^\star$ in Assumptions~\ref{A2} and \ref{A3}, implying its broader applicability to TCSDEs compared to PEM. However, BEM is more computationally expensive than PEM as illustrated by our numerical experiments in Section \ref{sec4}, which can be explained by the fact that BEM is an implicit method while PEM is an explicit method.  
\end{remark}
\section{Numerical experiments}\label{sec4}
In this section, we present two examples to validate our theoretical results in Theorems \ref{BEMT} and \ref{PEM1}. Throughout, we define $\{E(t)\}_{t\geq0}$ as a $0.9$-stable inverse subordinator whose Bernstein function is given by $\phi(r) = r^{0.9}$, and then use \cite[Algorithm 1]{jum2015numerical} to generate $\{D(t)\}_{t\geq0}$. Therefore, for a given step size $h$, we can obtain $\{E_h(t)\}_{t\in[0,T]}$ by applying \eqref{Ed}.
\subsection{One-dimensional non-linear TCSDE}\label{li1}
Fixing $T>0$, we consider using BEM \eqref{Ed}-\eqref{BEMTCSDE} and PEM \eqref{Ed}, \eqref{xq}-\eqref{PEMTCSDE} to approximate the TCSDE $\{X(t)\}_{t\in[0,T]}$ given by
\begin{equation}\label{fc1}
	\mathrm{d}X(t)=(X^2(t)-2X^5(t))\mathrm{d}E(t)+X^2(t)\mathrm{d}W(E(t))
\end{equation}
with $X(0)=1$.
\begin{proposition}\label{E1p}
	TCSDE \eqref{fc1} satisfies Assumptions \ref{A1}-\ref{A4}.
	\begin{proof}
		See Appendix \ref{app2}.
	\end{proof}
\end{proposition}

By Proposition \ref{E1p}, Theorems \ref{BEMT} and \ref{PEM1} can be applied to ensure the convergence of BEM and PEM. In particular, the step sizes we choose below satisfy the special condition $h\in(0,1\wedge\frac{1}{4K_1}]$ in Theorem ~\ref{BEMT} for BEM. Moreover, both methods for solving \eqref{fc1} are expected to achieve a convergence rate of $1/2$.

To obtain numerical approximations using BEM and PEM, we follow the approach in \cite{MR4179718,MR4065764}. More precisely, we first approximate the duality equation $\{Y(t)\}_{t\in[0,T]}$ associated with TCSDE \eqref{fc1} given by
$$
\mathrm{d}Y(t)=(Y^2(t)-2Y^5(t))\mathrm{d}t+Y^2(t)\mathrm{d}W(t)
$$
with $Y(0)=1$. Recall $N$ is found by \eqref{NandT}. Let $\{Y_h(t_n)\}_{n\in\{0,1\cdots N\}}$ denote the numerical approximation to the true solution $\{Y(t_k)\}_{k\in\{0,1\cdots N\}}$, which can be viewed as the numerical approximation of the solution to TCSDE \eqref{fc1} via $Y_h(E_h(t))=Y_h(t_n)$ defined in \eqref{BEMTCSDE} and \eqref{PEMTCSDE}. Then, we compute the strong $L_2$ error at time $T>0$ using
\begin{equation}\label{MC}
	\left(\frac{1}{M}\sum_{j=1}^{M}\left|Y^j_{h_0}(E_{h_0}(T))-Y^j_h(E_h(T))\right|^2\right)^\frac{1}{2},
\end{equation}
where $Y^j_{h_0}(E_{h_0}(T))$ denotes the reference solution obtained using numerical schemes with $h_0=2^{-15}$, $Y^j_h(E_h(T))$ denotes the numerical approximations for $h\in\{2^{-9},2^{-8},2^{-7},2^{-6}\}$, and $M$ is the number of sample paths.

For numerical experiments, we set $T=1$ and $M=300$. Figure \ref{paths}a shows a single sample path of $\{D(t)\}_{t\in[0,1]}$ and $\{E(t)\}_{t\in[0,1]}$ simulated using $h_0=2^{-15}$ while Figure \ref{paths}b depicts a sample path for each of $\{X(t)\}_{t\in[0,1]}$ and $\{Y(t)\}_{t\in[0,1]}$. The fluctuation of $\{X(t)\}_{t\in[0,1]}$ is smaller than that of $\{Y(t)\}_{t\in[0,1]}$ as the presence of $\{E(t)\}_{t\in[0,1]}$ dampens its fluctuation. This observation explains why TCSDEs can be used to model anomalous diffusion processes. Figure \ref{sll1} presents the convergence results for both BEM and PEM. The black line is the reference line with slope 1/2, whereas the red and blue  lines represent the log$L_2$ error \eqref{MC} for BEM and PEM, respectively. In particular, the estimated convergence rates obtained using linear regression are $0.4955$ for BEM and $0.5742$ for PEM, which support our  theoretical results. Furthermore, in Table \ref{runtime}, we report the runtimes for BEM and PEM with different step sizes, which exclude the generation time of $\{E(t)\}_{t\in[0,1]}$. 
We note that PEM requires significantly less computational time than BEM among all cases.
\begin{figure}[!ht]
	\centering
	\begin{minipage}[b]{0.48\textwidth}
		\centering
		\includegraphics[width=0.95\textwidth]{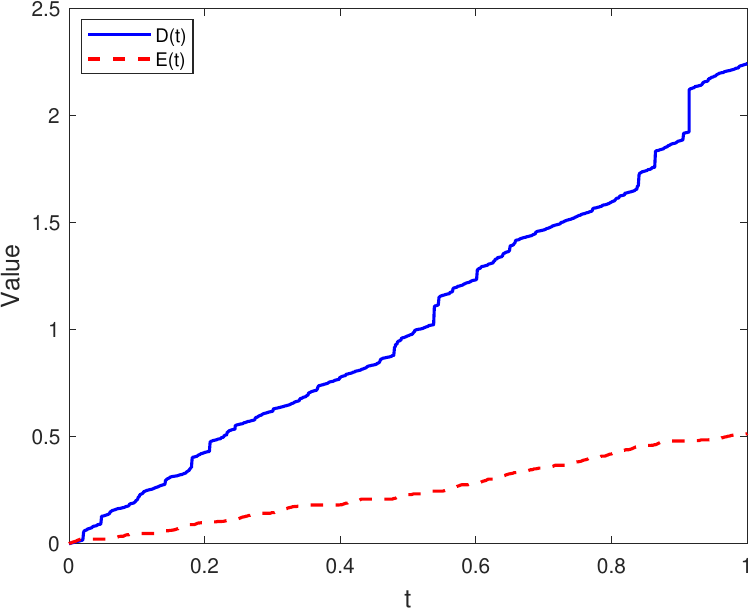}  
		\par\vspace{2mm}
		{\small\centering (a) One path of $D(t)$ and $E(t)$}
	\end{minipage}
	\hfill
	\begin{minipage}[b]{0.48\textwidth}
		\centering
		\includegraphics[width=0.95\textwidth]{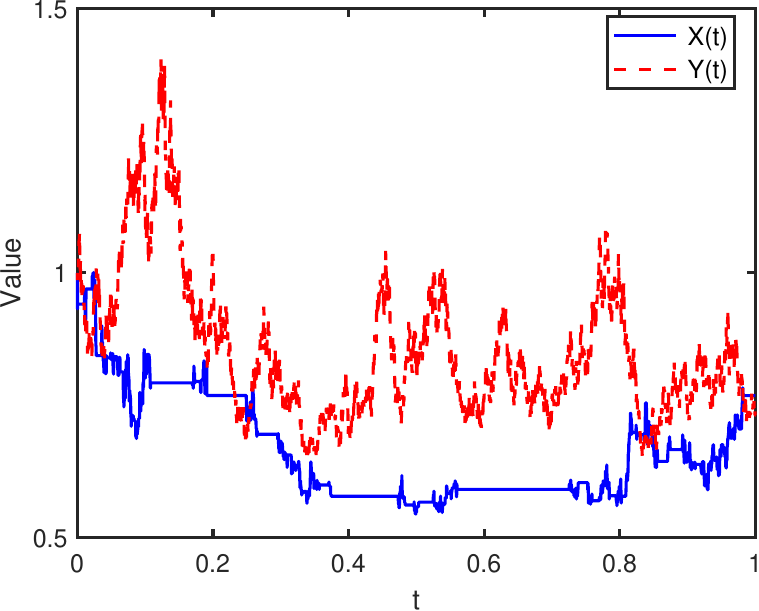}
		\par\vspace{2mm}
		{\small\centering (b) One path of $X(t)$ and $Y(t)$}
	\end{minipage}
	
	\vspace{4mm}
	
	\caption{Simulated sample paths}
	\label{paths}  
\end{figure}
\noindent  
\begin{figure}[!ht]
	\centering
	\includegraphics[width=100mm]{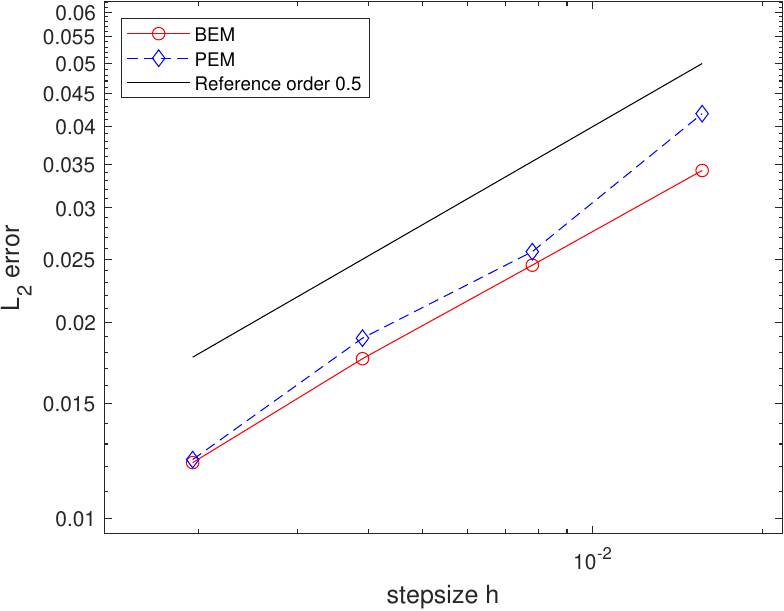}
	\caption{$L_2$ errors between the exact solution and numerical solutions obtained using  BEM and PEM with step sizes $h \in\{2^{-6},2^{-7},2^{-8},2^{-9}\}$}
	\label{sll1}
\end{figure}
\begin{table}[tbp]
	\centering
	\caption{Comparison of BEM and PEM methods: $L_2$ errors and runtimes at different step sizes}
	\label{runtime}
	\setlength{\tabcolsep}{15pt}
	\vspace{0.5em}  
	\begin{tabular}{c cc cc}
		\toprule
		\multirow{2}{*}{Step size} & \multicolumn{2}{c}{BEM} & \multicolumn{2}{c}{PEM} \\
		\cmidrule(r){2-3} \cmidrule(l){4-5}
		& $L_2$ error & Runtime & $L_2$ error & Runtime \\
		\midrule
		$2^{-9}$  & 0.008 & 0.9375 & 0.009 & 0.2300 \\
		$2^{-8}$  & 0.010 & 0.7963 & 0.014 & 0.0630 \\
		$2^{-7}$  & 0.016 & 0.3125 & 0.018 & 0.0200 \\
		$2^{-6}$  & 0.032 & 0.1526 & 0.040 & 0.0050 \\
		\bottomrule
	\end{tabular}
\end{table}

\subsection{2D stiff TCSDE}
Fixing $T>0$, we consider a 2-dimensional stiff TCSDE $\{X(t)\}_{t\in[0,T]}$ given by
\begin{equation}\label{li2}
	\mathrm{d}X(t)=(f(X(t))-AX(t))\mathrm{d}E(t)+g(X(t))\mathrm{d}W(E(t))
\end{equation}
with $X(0)=X_0\in\mathbb{R}^2$, where $A$ is a positive symmetric matrix defined by
\begin{equation}\label{E2A}
	A=\frac{1}{2}\left[
	\begin{matrix}
		201 & -199\\
		-199 & 201
	\end{matrix} 
	\right],
\end{equation}
and $f:\mathbb{R}^2\rightarrow \mathbb{R}^2$, $g:\mathbb{R}^2\rightarrow \mathbb{R}^{2\times2}$ are measurable functions given by 
\begin{equation}\label{E2FG}
	f(x)=\left[
	\begin{matrix}
		x_1-x_1^3\\
		x_2-x_2^3
	\end{matrix} 
	\right], \qquad
	g(x)=\frac{1}{2}\left[
	\begin{matrix}
		x_1 & 0\\
		0 &x_2
	\end{matrix} 
	\right].
\end{equation}
The system stiffness of TCSDE \eqref{li2}-\eqref{E2FG} originates from the large eigenvalue of matrix $A$, and we refer to  \cite{MR3608309,MR3974743} for more details.
\begin{proposition}\label{E2p}
	TCSDE \eqref{li2}-\eqref{E2FG} satisfies Assumptions \ref{A1}-\ref{A4}.
	\begin{proof}
		See Appendix \ref{app2}.
	\end{proof}
\end{proposition}

By Proposition \ref{E2p}, the approximated solutions obtained using BEM and PEM converge strongly to the true solution of TCSDE~\eqref{li2}-\eqref{E2FG}. 

For the numerical experiment, we follow the same procedure as described in  Section \ref{li1}. The reference solution in this case is computed with step size $h_0 = 2^{-16}$, while the numerical approximations are calculated using $h = 2^{-i}$ for $i \in\{7,8,9,10,11\} $ or $h = 2^{-i}$ for $i \in \{5,6,7,8,9\}$. The other parameters, $T$ and $M$, remain the same as Section \ref{li1}.

We present simulation results in Figures~\ref{2wei} and \ref{e2}. More precisely, Figure~\ref{2wei} displays a sample path of the reference solution $X(t)=[X_1(t),X_2(t)]_{ t\in[0,1]}$. Figure~\ref{e2} reveals behavioral differences between two methods for TCSDE \eqref{li2}-\eqref{E2FG}. While BEM produces accurate approximations even with large step sizes, PEM approximations deviate significantly from the true solution in the $L_2$-sense when the step size is large.
These results suggest the superior stability of the implicit method (BEM) over the explicit method (PEM), consistent with classical numerical analysis theory.
\begin{figure}[!ht]
	\centering
	\begin{minipage}[b]{0.48\textwidth}
		\centering
		\includegraphics[width=\textwidth]{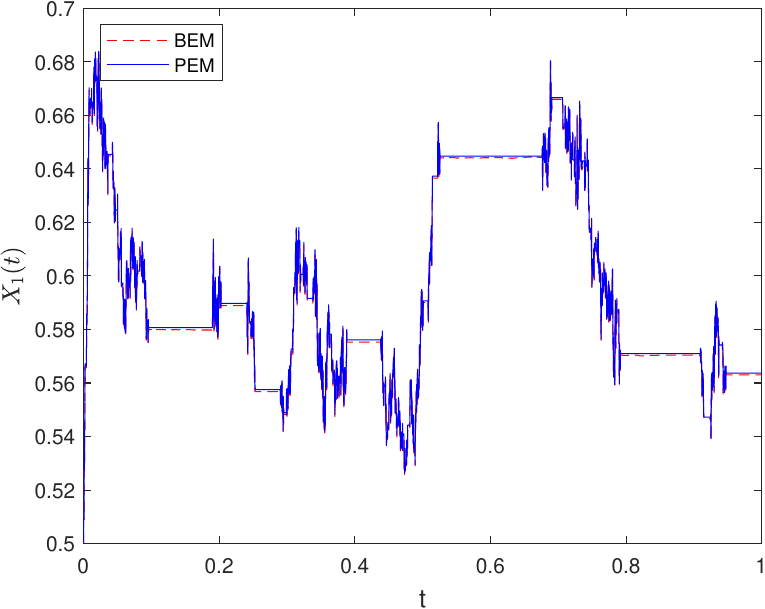}
		\par\vspace{2mm} 
		{\small\centering (a) One path of $X_1$}
	\end{minipage}
	\hfill 
	\begin{minipage}[b]{0.48\textwidth}
		\centering
		\includegraphics[width=\textwidth]{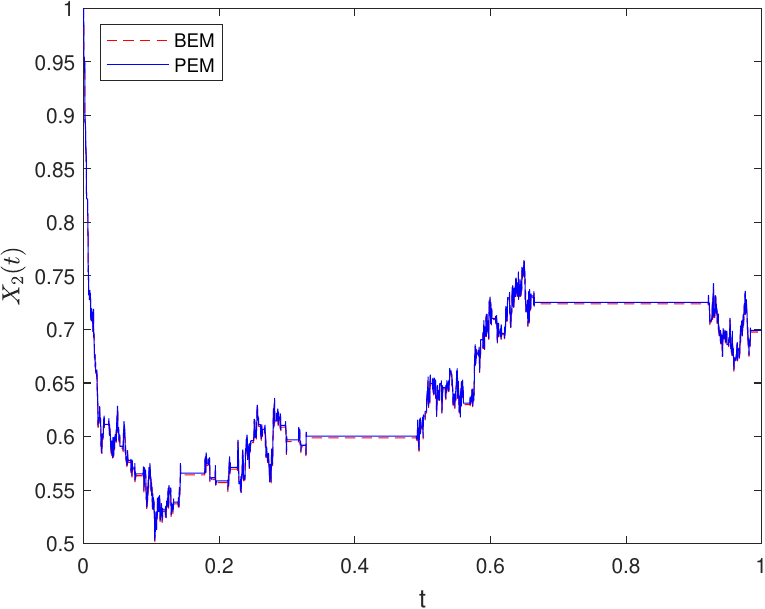}
		\par\vspace{2mm}
		{\small\centering (b) One path of $X_2$}
	\end{minipage}
	
	\vspace{4mm} 
	
	\caption{One path of reference solution $\{X(t)\}_{t\in[0,1]}=\{[X_1(t),X_2(t)]\}_{t\in[0,1]}$ obtained using BEM and PEM with $h_0=2^{-16}$}
	\label{2wei}
\end{figure}
\begin{figure}[!ht]
	\centering
	\begin{minipage}[b]{0.48\textwidth}
		\centering
		\includegraphics[width=\textwidth]{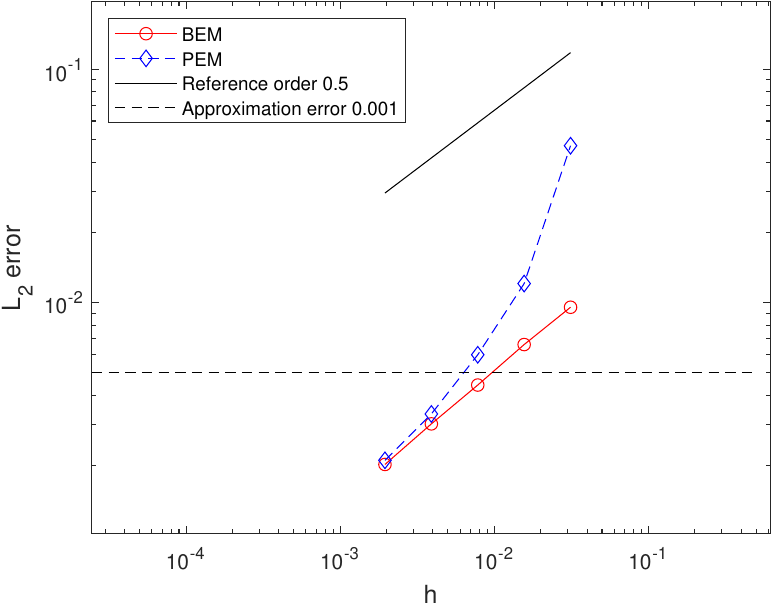}
		\par\vspace{2mm}
		{\small\centering (a) $h=2^{-i},i\in\{7,8,9,10,11\}$}
	\end{minipage}
	\hfill
	\begin{minipage}[b]{0.48\textwidth}
		\centering
		\includegraphics[width=\textwidth]{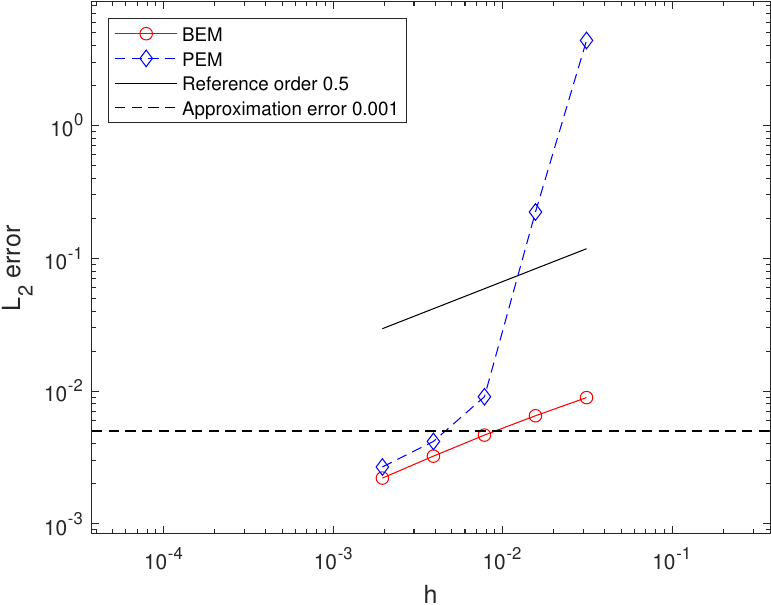}
		\par\vspace{2mm}
		{\small\centering (b) $h=2^{-i},i\in\{5,6,7,8,9\}$}
	\end{minipage}
	\vspace{4mm}
	\caption{Convergence results for BEM and PEM with different step sizes}
	\label{e2}
\end{figure}
\section{Proof of main results in Section \ref{sec3}}\label{sec5}
\subsection{Proof overview}\label{sec5.1}
For a fixed $T>0$, we denote by $\{Y_h(E_h(t))\}_{t\in[0,T]}$ the numerical approximation of TCSDE \eqref{fc} given explicitly by 
$$
Y_h(E_h(t))=\begin{cases}
	Y_h^B(E_h(t)) \text{\quad for BEM \eqref{Ed}-\eqref{BEMTCSDE}}, \\
	Y_h^P(E_h(t)) \text{\quad for PEM \eqref{Ed},\eqref{xq}-\eqref{PEMTCSDE}}. \\
\end{cases}
$$

To establish the strong convergence results in Theorems \ref{BEMT} and \ref{PEM1}, we aim to obtain an upper estimate for $\mathbb{E}\left[|X(t)-Y_h(E_h(t))|^2\right].$
To this end, We consider the following splitting: for any $t\in[0,T]$,
\begin{align}\label{RHS}
	\mathbb{E}\left[\left|X(t) - Y_h(E_h(t))\right|^2\right]\leq& 2\mathbb{E}\left[\left|X(t) - Y(E_h(t))\right|^2\right]+ 2\mathbb{E}\left[\left|Y(E_h(t)) - Y_h(E_h(t))\right|^2\right].
\end{align}
To upper bound the first term on the RHS of \eqref{RHS}, we use the properties of the time-changed process $\{E(t)\}_{t\in[0,T]}$ and SDE \eqref{do}. The relevant results are provided in Sections \ref{sec6.2} and \ref{sec6.3}, respectively. To upper bound the second term on the RHS of \eqref{RHS}, we investigate the convergence properties of BEM \eqref{Ed}-\eqref{BEMTCSDE} and PEM \eqref{Ed},\eqref{xq}-\eqref{PEMTCSDE}.
We present the convergence results and the corresponding proofs of Theorems \ref{BEMT} and \ref{PEM1} in Sections \ref{sec6.4} and \ref{sec6.5}, respectively.
\subsection{Error estimates for time-changed process}\label{sec6.2}
The following lemma states that $\{E_h(t)\}_{t\geq0}$ defined in \eqref{Ed} effectively approximates $\{E(t)\}_{t\geq0}$ for sufficiently small $h\in(0,1]$.
\begin{lemma}\label{shibian}\cite{MR3593021,MR2568272}
	We have, for any $h\in(0,1]$ and $t\geq0$, that
	\[
	E(t) - h  \leq  E_h(t)  \leq  E(t).
	\]
\end{lemma}

The next lemma provides an upper bound for the exponential moment of $\{E(t)\}_{t\geq0}$.
\begin{lemma}\label{qw}\cite[Lemma 2.3]{MR4179718}
	We have, for any $\lambda>0$ and $t \geq 0$, that
	$$
	\mathbb{E} \left[e^{\lambda E(t)}\right] \leq e^{C t}.
	$$
\end{lemma}
\subsection{Error estimates for SDE}\label{sec6.3}
To prove our main results, we present some pertinent results regarding SDE \eqref{do}. The result below provides moment estimates for the solution of SDE~\eqref{do}.
\begin{lemma}\cite[Theorem 4.1]{MR2380366}\label{ju}
	Suppose Assumptions \ref{A1}-\ref{A4} hold. Then, we have, for all $p\in[2,p^\star]$ and $t \geq 0$, that
	$$
	\mathbb{E}\left[\left|Y(t)\right|^p\right] \leq C e^{Ct}.
	$$
	Consequently, we obtain that 
	$$
	\sup_{0 \leq s \leq t}\mathbb{E}\left[\left|Y(s)\right|^p\right] \leq Ce^{Ct}.$$
\end{lemma}

The following lemma shows that the solution to SDE~\eqref{do} is H\"{o}lder continuous.
\begin{lemma}\label{jhd}
	Suppose Assumptions \ref{A1}-\ref{A4} hold. Then, we have, for any $p \in(1,\frac{p^\star} { \gamma}]$ and $t, s \geq 0$ with $|t-s| \leq 1$, that
	$$
	\mathbb{E}\left[\left|Y(t)-Y(s)\right|^p\right] \leq C|t-s|^{\frac{p}{2}} e^{C t}.
	$$
\end{lemma}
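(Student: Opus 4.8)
The plan is to work directly with the integral form of the classical SDE \eqref{do}. Assume without loss of generality that $s \le t$ (the case $t \le s$ is analogous, with the factor $\mathrm{e}^{C(s-t)} \le \mathrm{e}^{C}$ absorbed into the constant since $|t-s|\le 1$), and set $h := t - s \le 1$. Writing
\begin{equation*}
Y(t) - Y(s) = \int_s^t b(r, Y(r))\,\mathrm{d}r + \int_s^t g(r, Y(r))\,\mathrm{d}W(r) =: I_1 + I_2,
\end{equation*}
the elementary bound $\mathbb{E}[|Y(t) - Y(s)|^p] \le 2^{p-1}\big(\mathbb{E}[|I_1|^p] + \mathbb{E}[|I_2|^p]\big)$ reduces everything to estimating the drift and diffusion contributions separately.

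For the drift term, Hölder's inequality in time gives $|I_1|^p \le h^{p-1}\int_s^t |b(r,Y(r))|^p\,\mathrm{d}r$; taking expectations and using the growth bound \eqref{b} yields $\mathbb{E}[|I_1|^p] \le C h^{p-1}\int_s^t \big(1 + \mathbb{E}[|Y(r)|^{\gamma p}]\big)\,\mathrm{d}r$. Since $p \le p^\star/\gamma$, the exponent satisfies $\gamma p \le p^\star$, so Lemma \ref{ju} (together with Jensen's inequality to cover any exponent below $2$) bounds $\mathbb{E}[|Y(r)|^{\gamma p}]$ by $C\mathrm{e}^{Cr}\le C\mathrm{e}^{Ct}$ on $[s,t]$, whence $\mathbb{E}[|I_1|^p] \le C h^{p}\mathrm{e}^{Ct} \le C h^{p/2}\mathrm{e}^{Ct}$, the last step using $h\le 1$. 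For the diffusion term, when $p \ge 2$ I would combine the Burkholder--Davis--Gundy and Hölder inequalities, $\mathbb{E}[|I_2|^p] \le C_p\,\mathbb{E}\big[\big(\int_s^t |g(r,Y(r))|^2\,\mathrm{d}r\big)^{p/2}\big] \le C h^{p/2-1}\int_s^t \mathbb{E}[|g(r,Y(r))|^p]\,\mathrm{d}r$, then control the integrand by \eqref{g} as $C\big(1 + \mathbb{E}[|Y(r)|^{(\gamma+1)p/2}]\big)$; since $\gamma>1$ one has $(\gamma+1)p/2 < \gamma p \le p^\star$, so Lemma \ref{ju} applies and gives $\mathbb{E}[|I_2|^p] \le C h^{p/2}\mathrm{e}^{Ct}$. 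When $1 < p < 2$, Jensen's inequality and Itô's isometry give $\mathbb{E}[|I_2|^p] \le \big(\mathbb{E}[|I_2|^2]\big)^{p/2} = \big(\int_s^t \mathbb{E}[|g(r,Y(r))|^2]\,\mathrm{d}r\big)^{p/2}$, which \eqref{g} and Lemma \ref{ju} bound by $(C h\,\mathrm{e}^{Ct})^{p/2}$. Combining the two estimates yields the claim.

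As for where the real work lies: this is a routine temporal-regularity estimate for Itô processes, so no single step is a genuine obstacle. The only point requiring care is the bookkeeping of the polynomial exponents produced by the super-linear growth — order $\gamma p$ from the drift via \eqref{b}, and order $(\gamma+1)p/2$ (or $\gamma+1$) from the diffusion via \eqref{g} — and verifying that the hypothesis $p \in (1, p^\star/\gamma]$ is exactly what keeps each of these within the range where the a priori moment bound of Lemma \ref{ju} is available (after a harmless Jensen step absorbing exponents below $2$), while simultaneously propagating the exponential factor $\mathrm{e}^{Ct}$ through the time integrals and exploiting $h\le 1$ to trade the $h^p$ coming from the drift for the required $h^{p/2}$.
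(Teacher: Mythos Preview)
Your proposal is correct and follows essentially the same route as the paper: split the increment into drift and diffusion integrals, control the drift by H\"older in time together with the growth bound \eqref{b}, control the diffusion for $p\ge 2$ by the Burkholder--Davis--Gundy/moment inequality (the paper invokes Theorem~7.1 in \cite{MR2380366}) together with \eqref{g}, and then feed both into the moment bound of Lemma~\ref{ju}, handling $1<p\le 2$ by Jensen. If anything, your exponent bookkeeping (noting explicitly that $(\gamma+1)p/2<\gamma p\le p^\star$) is more explicit than the paper's.
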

\begin{proof}
	See Appendix \ref{app1}.
\end{proof} 
\subsection{Proof of Theorem \ref{BEMT}}\label{sec6.4}
In this section, we provide the proof for Theorem \ref{BEMT}. We start by defining the local truncation error $\{R_i\}_{i\in\mathbb{N}_0}$ on a temporal grid $\{t_i=ih,i\in\mathbb{N}_0\}$:
	\begin{align}\label{R_i}
		R_i := \int_{t_{i-1}}^{t_{i}}  b(r,Y(r)) - b(t_i,Y(t_i))  \mathrm{d} r + \int_{t_{i-1}}^{t_{i}}  g(r,Y(r)) - g(t_{i-1},Y(t_{i-1}))  \mathrm{d} W(r).
	\end{align}
Then, we establish the relationship between the $L_2$ error and the local truncation error \eqref{R_i}.
\begin{proposition}\label{jfwc}
	Suppose Assumption \ref{A2} holds with $\eta > 2$ and $h\in(0,1\wedge\frac{1}{4K_1}]$. Then, for any $n\in\mathbb{N}_0$,
	\begin{small}
		$$
		\mathbb{E}\left[ \left|Y(t_n) - Y_h^B(t_n)\right|^2 \right] \leq C \left( \sum_{i=1}^{n} \mathbb{E}\left[\left|R_i\right|^2\right] + h^{-1} \sum_{i=1}^{n} \mathbb{E}\left[ \left| \mathbb{E}\left[R_i \mid \mathcal{F}_{t_{i-1}}\right] \right|^2 \right] \right) e^{C t_n}.
		$$
	\end{small}
\end{proposition}
\begin{proof}
	See Appendix \ref{app1}.
\end{proof}
\begin{lemma}\label{Ch}
	Suppose Assumptions \ref{A1}-\ref{A4} hold with $\eta > 2$, $p^\star \geq 4\gamma - 2$. Let $h\in(0,1]$. Then, for any $n\in\mathbb{N}_0$,
	$$
	\sum_{i=1}^{n} \mathbb{E}\left[\left|R_i\right|^2\right] + h^{-1} \sum_{i=1}^{n} \mathbb{E}\left[ \left| \mathbb{E}\left[R_i \mid \mathcal{F}_{t_{i-1}}\right]\right|^2 \right] \leq C h e^{C t_n}.
	$$
\end{lemma}
\begin{proof}
	See Appendix \ref{app1}.
\end{proof}

\begin{proof}[Proof of Theorem \ref{BEMT}]
	For any $t\in[0,T]$, applying Lemma \ref{le2.3} to \eqref{RHS}  yields
	\begin{align}\label{0}
		\mathbb{E}\left[\left|X(t)-Y_h^B(E_h(t))\right|^2\right]\leq& 2\mathbb{E}\left[\left|Y(E(t))-Y(E_h(t))\right|^2\right]+2 \mathbb{E}\left[\left|Y(E_h(t))-Y_h^B(E_h(t))\right|^2\right].		\end{align}
	To obtain an upper bound for the first term on the RHS of \eqref{0}, we note that Lemma \ref{shibian} ensures $|E(t)-E_h(t)|<h\leq1$, which indicates Lemma~\ref{jhd} is applicable. This, together with Lemma~\ref{qw}, yields
	\begin{equation}\label{1}
		\begin{aligned}
			\mathbb{E}\left[\left|Y(E(t))-Y(E_h(t))\right|^2\right]
			\leq Ch\mathbb{E}\left[e^{CE(t)}\right]\leq Che^{Ct}\leq Che^{CT}.
		\end{aligned}
	\end{equation}
	To establish an upper bound for the second term on the RHS of \eqref{0}, we use Proposition \ref{jfwc} and Lemma \ref{Ch} to obtain, for any $n\in \mathbb{N}_0$, that
	\begin{equation}\label{BEMLS}
		\mathbb{E}\left[ |Y(t_n) - Y_h^B(t_n)|^2 \right] \leq Che^{Ct_n}.
	\end{equation} 
	Moreover, we have $E_h(t)=t_n$, $n\in\{1,\cdots N\}$, by its definition in  \eqref{Ed}. Then, a combination of \eqref{BEMLS} and Lemmas \ref{shibian} and \ref{qw} yields
	\begin{align}\label{2}
		\mathbb{E}\left[\left|Y(E_h(t))-Y_h^B(E_h(t))\right|^2\right]&\leq Ch\mathbb{E}\left[e^{CE_h(t)}\right]\leq Ch\mathbb{E}\left[e^{CE(t)}\right]\leq Che^{Ct}\leq Che^{CT}.
	\end{align}
	Substituting \eqref{1} and \eqref{2} into \eqref{0} yields the desired result.
	This completes the proof.
\end{proof}
\subsection{Proof of Theorem \ref{PEM1}}\label{sec6.5}
For short notation, define, for any $x\in\mathbb{R}^d$, $n\in\mathbb{N}_0$, $h\in(0,1]$,
\begin{equation}\label{Phi}
	\Phi(x,t_{n},h):=\kappa(x)+h b(t_{n}, \kappa(x)) +g(t_{n}, \kappa(x))\Delta W_{n}.
\end{equation}
Then, PEM \eqref{PEM} can be rewritten as
$$
\begin{cases}
	Y_h^P(t_{n+1})=\Phi(Y_h^P(t_n),t_n,h),\\
	Y_h^P(0)=Y(0).
\end{cases}
$$
Denote by $\mathcal{P}_i(x):=x-\mathbb{E}\left[x\mid\mathcal{F}_{t_{i-1}}\right]$. The following proposition provides a convergence result for PEM \eqref{PEM}, which is  a special case of \cite[Lemma 3.5]{MR3493491}. 
\begin{proposition}\cite[Lemma 3.5]{MR3493491}\label{PEMSL}
	Suppose Assumptions \ref{A1}-\ref{A4} hold with $\eta>3$. Let $h\leq 1$. Then, we have, for every $n\in \mathbb{N}$, that
	\begin{align}\label{sum}
	&\mathbb{E}\left[\left|Y\left(t_n\right)-Y_h^P\left(t_n\right)\right|^2\right] 
	\nonumber\\&\leq \sum_{i=1}^n   \left(1+h^{-1}\right)\mathbb{E}\left[\left|\mathbb{E}\left[Y\left(t_i\right)-\Phi\left(Y\left(t_{i-1}\right), t_{i-1}, h\right) \mid \mathcal{F}_{t_{i-1}}\right]\right|^2\right]e^{Ct_n} \nonumber\\
	&\quad+\frac{\eta-1}{\eta-3} \sum_{i=1}^n\mathbb{E}\left[\left|\mathcal{P}_i\left(Y\left(t_i\right)-\Phi\left(Y\left(t_{i-1}\right), t_{i-1}, h\right)\right)\right|^2\right] e^{Ct_n}.
\end{align}
\end{proposition}
\begin{lemma}\label{PEMSLJ}
	Suppose Assumptions \ref{A1}-\ref{A4} hold with $p^\star\geq 6\gamma-4$. Then, we have, for any $i\in\mathbb{N}$, that
	\begin{equation}\label{X-P}
		\mathbb{E}\left[\left|\mathbb{E}\left[Y\left(t_i\right)-\Phi\left(Y\left(t_{i-1}\right), t_{i-1}, h\right) \mid \mathcal{F}_{t_{i-1}}\right]\right|^2\right]\leq Ch^3e^{Ct_i},
	\end{equation}
	\begin{equation}\label{ID}
		\mathbb{E}\left[\left|\mathcal{P}_i(Y\left(t_i\right)-\Phi\left(Y\left(t_{i-1}\right), t_{i-1}, h\right))\right|^2\right]\leq  Ch^2e^{Ct_i}.
	\end{equation}
	
\end{lemma}
\begin{proof}
	See Appendix \ref{app1}.
\end{proof}

\begin{proof}[Proof of Theorem \ref{PEM1}]
	Substituting \eqref{X-P} and \eqref{ID} in Lemma \ref{PEMSLJ} into \eqref{sum} and noting that $t_n=nh$ yield
	\begin{align}\label{PEMHE}
		\mathbb{E}\left[\left|Y(t_n)-Y^P_h(t_n)\right|^2\right]&\leq\sum_{i=1}^{n}\left[(1+h^{-1})Ch^3e^{Ct_i}\right]e^{Ct_n}+\sum_{i=1}^{n}Ch^2Ce^{Ct_i}e^{Ct_n}\nonumber\\
		&\leq Cht_ne^{Ct_n}\leq Che^{Ct_n},
	\end{align}
	where the last inequality holds due to $x\leq e^x$ for all $x\in\mathbb{R}$.
	By using \eqref{PEMHE} and by following the same argument as in the proof of Theorem \ref{BEMT}, we obtain, for any $t\in[0,T]$, that
	\begin{align}
		\mathbb{E}\left[\left|X(t)-Y_h^P(E_h(t))\right|^2\right]\leq Che^{CT}\nonumber.
	\end{align}
	This completes the proof.
\end{proof}
\bmhead{Acknowledgements}
The authors are grateful to Professor Ziheng Chen for his helpful suggestions.
\section*{Declarations}
\bmhead{Funding}This work was supported by the Natural Science Foundation of Changsha (No. kq2502101), the National Natural Science Foundation of China (Nos. 12471394, 12371417), and the Guangzhou-HKUST (GZ) Joint Funding Program (No. 2025A03J3322).
\bmhead{Competing interests} The authors have not disclosed any conflict of interest.
\bmhead{Authors' contributions}
\textbf{Shuai Wang}: Conceptualization, Methodology, Software, Investigation, Writing - original draft.
\textbf{Yuanling Niu}: Supervision, Writing - review $\&$ editing, Funding acquisition. 
\textbf{Ying Zhang}: Supervision, Writing - review $\&$ editing, Funding acquisition.
\begin{appendices}
\section{Proof of results in Section \ref{sec4}}\label{app2}
\begin{proof}[Proof of Proposition \ref{E1p}]
	For any $x,y\in \mathbb{R}$
	$$
	\begin{aligned}
		|b(t,x)-b(t,y)|&=|x^2-2x^5-y^2+2y^5|\\&=\left|(x-y)(x+y)-2(x-y)(x^4+x^3y+x^2y^2+xy^3+y^4)\right|\\
		&\leq|x-y|\left|\left(x+y\right)-2(x^4+x^3y+x^2y^2+xy^3+y^4)\right|\\
		&\leq|x-y|\left[\left|x+y|+2|x^4+x^3y+x^2y^2+xy^3+y^4\right|\right]\\
		&\leq 9\left(1+|x|^4+|y|^4\right)\left|x-y\right|,
	\end{aligned}
	$$
	where the last inequality holds due to Young's inequality. It is clear that  $|b(t,0)|=0\leq K$ and $|g(t,0)|=0\leq K$ for all $t\geq0$ and $K>0$. This implies  Assumption \ref{A1} holds with $\gamma=5$ and $K\geq9$.
	
	Moreover, for any $\eta>1$, applying the inequality $-2(x^3y+xy^3)\leq x^4+y^4+2x^2y^2$ yields
	$$
	\begin{aligned}
		&(x-y)\left(x^2-2x^5-y^2+2y^5\right)+\frac{\eta-1}{2}\left|x^2-y^2\right|^2\\
		&\leq|x-y|^2\left[x+y-2\left(x^4+xy^3+x^2y^2+x^3y+y^4\right)+\frac{\eta-1}{2}\left|x+y\right|^2\right]\\
		&\leq\left|x-y\right|^2\left[x+y-x^4-y^4+\left(\eta-1\right)\left|x+y\right|^2\right]\\
		&\leq \left(\frac{1}{2}+\frac{1}{2}\left(\eta-\frac{1}{2}\right)^2\right)\left|x-y\right|^2.
	\end{aligned}
	$$
	This implies that Assumption \ref{A2} holds for any $\eta>1$ with $K_1\geq\frac{1}{2}+\frac{1}{2}\left(\eta-\frac{1}{2}\right)^2$. In particular, we make the concrete choice $\eta=3$ and $K_1=13$. This ensures that for the BEM, every step size $h$ in Section \ref{li1} satisfies $h\in(0,1\wedge\frac{1}{4K_1}]$.
	
	Similarly, Assumption \ref{A3} holds for any $p^\star>1$ due to the following inequality
	$$
	\begin{aligned}
		x\left(x^2-2x^5\right)+\frac{p^\star-1}{2}x^2= x^2\left(x-2x^4+\frac{p^\star-1}{2}\right)\leq K\left(1+|x|^2\right).
	\end{aligned}
	$$
	As TCSDE \eqref{fc1} is an autonomous equation, Assumption \ref{A4} holds.
	This completes the proof.
\end{proof}
\begin{proof}[Proof of Proposition \ref{E2p}]
	For any $x,y\in \mathbb{R}^2$,
	\begin{equation}\label{E2A1}
		|b(t,x)-b(t,y)|=|f(x)-f(y)-A(x-y)|
		\leq|f(x)-f(y)|+ |A(x-y)|.
	\end{equation}
	By applying similar arguments as in the proof of Proposition \ref{E1p}, we obtain that 
	\begin{align}\label{b_1}
		|f(x)-f(y)| &= \sqrt{\sum_{i=1}^2 |(x_i-y_i)- (x_i^3-y_i^3)|^2} \nonumber\\
		&= \sqrt{\sum_{i=1}^2 |x_i-y_i|^2 \cdot |1-(x_i^2+x_i y_i+y_i^2)|^2} \nonumber\\
		&\leq \sqrt{\sum_{i=1}^2 |x_i-y_i|^2 \cdot \left(1+\frac{3}{2}(x_i^2+y_i^2)\right)^2} \nonumber\\
		&\leq \frac{3}{2}(1+|x|^2+|y|^2) \sqrt{\sum_{i=1}^2 |x_i-y_i|^2} \nonumber\\
		&= \frac{3}{2}(1+|x|^2+|y|^2)|x-y|.
	\end{align}
	Moreover, we note that the eigenvalues of $A$ are $\lambda_1=1$ and $\lambda_2=200$. Hence, we have
	\begin{equation}\label{b_2}
		|A(x-y)|\leq\|A\|\cdot|x-y|\leq200|x-y|.
	\end{equation}
	By substituting \eqref{b_1} and \eqref{b_2} into \eqref{E2A1}, we conclude that
	$$
	|b(t,x)-b(t,y)|\leq \frac{3}{2}(1+|x|^2+|y|^2)|x-y|+200|x-y|\leq202(1+|x|^2+|y|^2)|x-y|.
	$$
	This implies Assumption \ref{A1} holds with $\gamma=3$ and $K=202$.
	
	To show Assumption \ref{A2} holds, for any $x,y\in\mathbb{R}^2$, we denote by
	$$
	z=\left[
	\begin{matrix}
		z_1\\
		z_2
	\end{matrix}
	\right]:=x-y=\left[
	\begin{matrix}
		x_1-y_1\\
		x_2-y_2
	\end{matrix}
	\right].
	$$
	We note that 
	$$
	f ( x ) - f ( y ) =
	\left[\begin{matrix}
		x_1-y_1-(x_1^3-y_1^3)\\
		x_2-y_2-(x_2^3-y_2^3)
	\end{matrix}\right]
	=\left[\begin{matrix}
		z_1-z_1(x_1^2+x_1y_1+y_1^2)\\
		z_2-z_2(x_2^2+x_2y_2+y_2^2)
	\end{matrix}\right],
	$$
	and
	$$
	A(x-y)=Az=\frac{1}{2}\left[\begin{matrix}
		201z_1-199z_2\\
		-199z_1+201z_2
	\end{matrix}\right].
	$$
	Therefore, we have
	$$
	b(t,x)-b(t,y)=f(x)-f(y)-A(x-y)=\left[\begin{matrix}
		z_1-z_1s_1-\frac{201}{2}z_1+\frac{199}{2}z_2\\
		z_2-z_2s_2+\frac{199}{2}z_1-\frac{201}{2}z_2
	\end{matrix}\right],
	$$
	where $s_1=x_1^2+x_1y_1+y_1^2$ and $s_2=x_2^2+x_2y_2+y_2^2$. Then, we obtain
	$$
	\begin{aligned}
		\left<b(t,x)-b(t,y),x-y\right>
		&= \left(-s_1-\frac{199}{2}\right)z_1^2+\frac{199}{2}z_1z_2+\left(-s_2-\frac{199}{2}\right)z_2^2+\frac{199}{2}z_1z_2\\
		&= \left(-s_1-\frac{199}{2}\right)z_1^2+\left(-s_2-\frac{199}{2}\right)z_2^2+199z_1z_2\\
		&\leq\left(-s_1-\frac{199}{2}\right)z_1^2+\left(-s_2-\frac{199}{2}\right)z_2^2+\frac{199}{2}\left(z_1^2+z_2^2\right)\\
		&=-s_1z_1^2-s_2z_2^2\\
		&\leq0.
	\end{aligned}
	$$
	Moreover, we have, for any $x,y\in\mathbb{R}^2$, that
	$$
	|g(t,x)-g(t,y)|^2=\left(\frac{1}{2}\right)^2\left[\left(x_1-y_1\right)^2+\left(x_2-y_2\right)^2\right]=\frac{1}{4}\left|x-y\right|^2.
	$$
	Then, for any $\eta>1$, we can deduce that
	$$
	\left<b(t,x)-b(t,y),x-y\right>+\frac{\eta-1}{2}\left|g(t,x)-g(t,y)\right|^2\leq K_1|x-y|^2.
	$$
	This implies that Assumption \ref{A2} holds for any $\eta>3$ with $K_1\geq\frac{\eta-1}{8}$.
	
	By using similar arguments to the above, we obtain that
	\begin{align}
		\langle b(t,x), x \rangle +\frac{p^\star - 1}{2}|g(t,x)|^2&= -\frac{ 199}{2}|x|^2 - 199x_1 x_2 - \left(x_1^4 + x_2^4\right)+\frac{p^\star - 1}{8}|x|^2\nonumber\\
		&\leq\frac{p^\star - 1}{8}\left(1+|x|^2\right)\nonumber.
	\end{align}
	Hence, Assumption~\ref{A3} holds for any $p^\star>1$ with an appropriate choice of $K\geq \frac{p^\star-1}{8}$. 
	
	Assumption~\ref{A4} holds as TCSDE~\eqref{li2} has autonomous coefficients. This completes the proof.
	
\end{proof}
\section{Proof of auxiliary results in Section \ref{sec5}}\label{app1}
\begin{proof}[Proof of Lemma \ref{jhd}]
	Recall the expression of SDE \eqref{do}. For $p>2$, by applying the elementary inequality, H\"{o}lder's inequality, and \cite[Theorem 7.1]{MR2380366}, we obtain, for any $0\leq s \leq t$, that
	$$\begin{aligned} 
		&\mathbb{E}\left[|Y(t)-Y(s)|^p\right] 
		\nonumber\\&\leq   2^{p-1} \mathbb{E} \left[\left|\int_s^tb(r, Y(r)) \mathrm{~d} r\right|^p\right]
		+2^{p-1} \mathbb{E} \left[\left|\int_s^tg(r, Y(r)) \mathrm{~d} W(r)\right|^p\right]\\
		&\leq   C|t-s|^{p-1}\mathbb{E}\left[\int_s^t |b(r, Y(r))|^p \mathrm{~d} r\right]+C|t-s|^{\frac{p-2}{2}}\mathbb{E}\left[\int_s^t |g(r, Y(r))|^p \mathrm{~d} r\right].
	\end{aligned}$$
	Applying the elementary inequality, Fubini theorem, Remark~\ref{re2.1}, and Lemma~\ref{ju} yields
	$$\begin{aligned}
		\mathbb{E}\left[\left|Y(t)-Y(s)\right|^p\right] & \leq C|(t-s)|^{p-1}\mathbb{E} \left[\int_s^t(1+|Y(r)|^{\gamma})^p\mathrm{~d} r\right]\nonumber\\
		&\quad+C|t-s|^{\frac{p-2}{2}} \mathbb{E} \left[\int_s^t(1+|Y(r)|^{\frac{\gamma+1}{2}})^p\mathrm{~d} r\right]\\
		& \leq C|t-s|^{\frac{p}{2}} e^{C t}.
	\end{aligned}
	$$
	
	For $p\in$$(1, 2]$, a similar result can be obtained by using Lemma~\ref{ju} and Jensen's inequality. Combining the results in both cases yields the desired result.
\end{proof}
\begin{proof}[Proof of Proposition \ref{jfwc}]
	To simplify the notation, we define, for any $i\in\mathbb{N}$,
	$$e_i:=Y(t_i)-Y_h^B(t_i),\quad\Delta b_i:=b(t_i,Y(t_i))-b(t_i,Y_h^B(t_i)),\quad\Delta g_{i}:=g(t_{i},Y(t_{i}))-g(t_{i},Y_h^B(t_{i})) .$$
	Recall the expressions of SDE \eqref{do}, the corresponding BEM \eqref{BEM}, and $R_i$ in \eqref{R_i}, we obtain that
	$$
	e_i=e_{i-1}+h\Delta b_i+\Delta g_{i-1}\Delta W_{i-1}+R_i.
	$$
	By using the identity
	$
	|u|^2-|v|^2+|u-v|^2=2\left<u,u-v\right>
	$
	with $u=e_i,v=e_{i-1}$, and by taking expectations on both sides, we obtain
	\begin{align}\label{zong}
		&\mathbb{E}\left[|e_i|^2\right]-\mathbb{E}\left[|e_{i-1}|^2\right]+\mathbb{E}\left[|e_i-e_{i-1}|^2\right]\nonumber\\&=2\mathbb{E}\left[\left<e_i,e_i-e_{i-1}\right>\right]\nonumber\\
		&=2\mathbb{E}\left[\left<e_i,h\Delta b_i+\Delta g_{i-1}\Delta W_{i-1}+R_i\right>\right]\nonumber\\
		&=2h\mathbb{E}\left[\left<e_i,\Delta b_i\right>\right]+
		2\mathbb{E}\left[\left<e_i-e_{i-1},\Delta g_{i-1}\Delta W_{i-1}+R_i\right>\right]\nonumber\\&\quad+
		2\mathbb{E}\left[\left<e_{i-1},\Delta g_{i-1}\Delta W_{i-1}+R_i\right>\right]\nonumber\\
		&=:I_1^i+I_2^i+I_3^i.
	\end{align}
	To obtain an upper bound for $I_1^i$, by applying Assumption \ref{A2}, we have that
	\begin{equation}\label{I1}
		I_1^i\leq 2hK_1\mathbb{E}\left[|e_i|^2\right]-(\eta-1)h\mathbb{E}\left[|\Delta g_i|^2\right].
	\end{equation}
	To establish an upper bound for $I_2^i$, an application of the elementary inequality and Young's inequality yields
	\begin{align}\label{I2}
		I_2^i&\leq \mathbb{E}\left[|e_i-e_{i-1}|^2\right]+\mathbb{E}\left[|\Delta g_{i-1}\Delta W_{i-1}+R_i|^2\right]\nonumber\\
		&=\mathbb{E}\left[|e_i-e_{i-1}|^2\right]+h\mathbb{E}\left[|\Delta g_{i-1}|^2\right]+\mathbb{E}\left[|R_i|^2\right]+2\mathbb{E}\left[\left<\Delta g_{i-1}\Delta W_{i-1},R_i\right>\right]\nonumber\\
		&\leq \mathbb{E}\left[|e_i-e_{i-1}|^2\right]+h\mathbb{E}\left[|\Delta   g_{i-1}|^2\right]+\mathbb{E}\left[|R_i|^2\right]+(\eta-2)h\mathbb{E}\left[|\Delta g_{i-1}|^2\right]+\frac{1}{\eta-2}\mathbb{E}\left[|R_i|^2\right]\nonumber\\
		&=\mathbb{E}\left[|e_i-e_{i-1}|^2\right]+(\eta-1)h\mathbb{E}\left[|\Delta g_{i-1}|^2\right]+\frac{\eta-1}{\eta-2}\mathbb{E}\left[|R_i|^2\right].
	\end{align}
	To upper bound $I_3^i$, we first note that
	$$
	\begin{aligned}
		\mathbb{E}\left[\left<e_{i-1},\Delta g_{i-1}\Delta W_{i-1}\right>\right]
		&=\mathbb{E}\left[\mathbb{E}\left[\left<e_{i-1},\Delta g_{i-1}\Delta W_{i-1}\right>\right]|\mathcal{F}_{t_{i-1}}\right]\nonumber\\&=\mathbb{E}\left[\left<e_{i-1},\mathbb{E}\left[\Delta g_{i-1}\Delta W_{i-1}|\mathcal{F}_{t_{i-1}}\right]\right>\right]\nonumber\\&=0.
	\end{aligned}
	$$
	Then, by using the properties of conditional expectation and the elementary inequality, we obtain that
	\begin{align}\label{I3}
		I_3^i&=2\mathbb{E}\left[\left<e_{i-1},R_i\right>\right]\nonumber\\
		&=2\mathbb{E}\left[\mathbb{E}\left[\left<e_{i-1},R_i\right>|\mathcal{F}_{t_{i-1}}\right]\right]\nonumber\\
		&=2\mathbb{E}\left[\left<e_{i-1},\mathbb{E}[R_i|\mathcal{F}_{t_{i-1}}]\right>\right]\nonumber\\
		&\leq h\mathbb{E}\left[|e_{i-1}|^2\right]+h^{-1}\mathbb{E}\left[|\mathbb{E}[R_i|\mathcal{F}_{t_{i-1}}]|^2\right].
	\end{align}
	Substituting \eqref{I1}-\eqref{I3} into \eqref{zong} yields
	$$
	\begin{aligned}
		\mathbb{E}\left[|e_i|^2\right]-\mathbb{E}\left[|e_{i-1}|^2\right]&\leq 2hK_1\mathbb{E}\left[|e_i|^2\right]+(\eta-1)h\left(\mathbb{E}\left[|\Delta g_{i-1}|^2\right]-\mathbb{E}\left[|\Delta g_i|^2\right]\right)\\
		&\quad+h\mathbb{E}\left[|e_{i-1}|^2\right]+h^{-1}\mathbb{E}\left[|\mathbb{E}[R_i|\mathcal{F}_{t_{i-1}}]|^2\right]+\frac{\eta-1}{\eta-2}\mathbb{E}\left[|R_i|^2\right].
	\end{aligned}
	$$
	By noticing $\mathbb{E}[|e_0|^2]=0$ and $\mathbb{E}[|\Delta g_0|^2]=0$, we obtain
	\begin{align}
		\mathbb{E}\left[|e_n|^2\right]&=\sum_{i=1}^{n}\left(\mathbb{E}\left[|e_i|^2\right]-\mathbb{E}\left[|e_{i-1}|^2\right]\right)\nonumber\\
		&\leq 2hK_1\sum_{i=1}^{n}\mathbb{E}\left[|e_i|^2\right]+(\eta-1)h\left(\mathbb{E}\left[|\Delta g_0|^2\right]-\mathbb{E}\left[|\Delta g_n|^2\right]\right)\nonumber\\
		&\quad+h\sum_{i=1}^{n}\mathbb{E}\left[|e_{i-1}|^2\right]+h^{-1}\sum_{i=1}^{n}\mathbb{E}\left[|\mathbb{E}\left[R_i|\mathcal{F}_{t_{i-1}}\right]|^2\right]+\frac{\eta-1}{\eta-2}\sum_{i=1}^{n}\mathbb{E}\left[|R_i|^2\right]\nonumber\\
		&\leq 2hK_1 \mathbb{E}\left[|e_n|^2\right]+ 2hK_1\sum_{i=1}^{n-1}\mathbb{E}[|e_i|^2]+h\sum_{i=1}^{n}\mathbb{E}[|e_{i-1}|^2]\nonumber\\
		&\quad+h^{-1}\sum_{i=1}^{n}\mathbb{E}\left[|\mathbb{E}\left[R_i|\mathcal{F}_{t_{i-1}}\right]|^2\right]+\frac{\eta-1}{\eta-2}\sum_{i=1}^{n}\mathbb{E}\left[|R_i|^2\right].\nonumber
	\end{align}
Since $h\in(0,1\wedge\frac{1}{4K_1}]$, we obtain $2hK_1\leq1/2$ and consequently $1-2hK_1\geq1/2$. By rearranging terms to isolate $\mathbb{E}\left[|e_n|^2\right]$, we obtain that
	$$
	\begin{aligned}
		\mathbb{E}\left[|e_n|^2\right]&\leq \frac{1+2K_1}{1-2hK_1}h\sum_{i=1}^{n-1}\mathbb{E}\left[|e_{i}|^2\right] +\frac{\eta-1}{(\eta-2)(1-2hK_1)}\sum_{i=1}^{n}\mathbb{E}\left[|R_i|^2\right]\\&\quad+ h^{-1}\frac{1}{1-2hK_1}\sum_{i=1}^{n}\mathbb{E}\left[|\mathbb{E}\left[R_i|\mathcal{F}_{t_{i-1}}\right]|^2\right]\\
		&\leq Ch\sum_{i=1}^{n-1}\mathbb{E}\left[|e_{i}|^2\right] +C\sum_{i=1}^{n}\mathbb{E}\left[|R_i|^2\right]+Ch^{-1}\sum_{i=1}^{n}\mathbb{E}\left[|\mathbb{E}\left[R_i|\mathcal{F}_{t_{i-1}}\right]|^2\right]\\
		&=:Ch\sum_{i=1}^{n-1}\mathbb{E}\left[|e_{i}|^2\right]+b_n,
	\end{aligned}
	$$
	where $b_n:=C\left(\sum_{i=1}^{n}\mathbb{E}\left[|R_i|^2\right]
	+h^{-1}\sum_{i=1}^{n}\mathbb{E}\left[|\mathbb{E}\left[R_i|\mathcal{F}_{t_{i-1}}\right]|^2\right]\right)$. By Gronwall's inequality \cite[Lemma]{MR878027}, we conclude that
	$$
	\begin{aligned}
		\mathbb{E}\left[|e_n|^2\right]&\leq b_ne^{\sum_{i=0}^{n-1}Ch}\leq b_ne^{Ct_n}.
	\end{aligned}
	$$
	This completes the proof.
\end{proof}
\begin{proof}[Proof of Lemma \ref{Ch}]
	Recall the definition of $R_i$ given in \eqref{R_i}.  By using the elementary inequality, H\"{o}lder's inequality, and It\^{o}'s isometry, we obtain, for any $i \leq n$, that
\begin{small}
	\begin{align}\label{ER}
	&\mathbb{E}\left[|R_i|^2\right]\nonumber\\
	&= \mathbb{E}\left[
	\left|
	\int_{t_{i-1}}^{t_i} b(r,Y(r)) - b(t_i,Y(t_i)) \, \mathrm{d}r
	\right.\right.\left.\left. + \int_{t_{i-1}}^{t_i} g(r,Y(r)) - g(t_{i-1},Y(t_{i-1})) \, \mathrm{d}W(r)
	\right|^2
	\right]\nonumber\\
	&\leq 2\mathbb{E}\left[\left|\int_{t_{i-1}}^{t_{i}}b(r,Y(r))-b(t_i,Y(t_i))\mathrm{~d} r\right|^2\right]\nonumber\\&\quad+2\mathbb{E}\left[\left|\int_{t_{i-1}}^{t_{i}}g(r,Y(r))-g(t_{i-1},Y(t_{i-1}))\mathrm{~d} W(r)\right|^2\right]\nonumber\\
	&\leq 2\mathbb{E}\left[h\int_{t_{i-1}}^{t_{i}}|b(r,Y(r))-b(t_i,Y(t_i))|^2\mathrm{~d} r\right]\nonumber\\&\quad+2\mathbb{E}\left[\int_{t_{i-1}}^{t_{i}}|g(r,Y(r))-g(t_{i-1},Y(t_{i-1}))|^2\mathrm{~d} r\right] \nonumber\\
	&=:M_1^i+M_2^i.		
\end{align}
\end{small}
	To establish an upper bound for $M_1^i$, we apply the elementary inequality, Assumptions~\ref{A1} and \ref{A4}, Fubini's theorem, and H\"{o}lder's inequality to obtain that
	\begin{align}\label{M10}
		M_1^i &\leq 4\mathbb{E}\left[h\int_{t_{i-1}}^{t_{i}}\left|b(r,Y(r))-b(t_i,Y(r))\right|^2+|b(t_i,Y(r))-b(t_i,Y(t_i))|^2\mathrm{~d} r\right] \nonumber \\
		&\leq 4K^2h\mathbb{E}\left[\int_{t_{i-1}}^{t_{i}}\left(1+|Y(r)|^\gamma\right)^2|t_i-r|\mathrm{d}r\right]  \nonumber\\&\quad+4K^2h\mathbb{E}\left[\int_{t_{i-1}}^{t_{i}}\left(1+|Y(r)|^{\gamma-1}+|Y(t_i)|^{\gamma-1}\right)^2|Y(r)-Y(t_i)|^2\mathrm{~d} r\right] \nonumber \\
		&\leq 8K^2h\int_{t_{i-1}}^{t_{i}}\left(1+\mathbb{E}\left[|Y(t)|^{2\gamma}\right]\right)|t_i-r|\mathrm{~d}r \nonumber\\ &\quad+4hK_1^2\int_{t_{i-1}}^{t_{i}}\left(\mathbb{E}\left[(1+|Y(r)|^{\gamma-1}+|Y(t_i)|^{\gamma-1})^\frac{4\gamma-2}{\gamma-1}\right]\right)^{\frac{\gamma-1}{2\gamma-1}} \nonumber\\&\qquad\times\left(\mathbb{E}\left[|Y(r)-|Y(t_i)|^{\frac{4\gamma-2}{\gamma}}\right]\right)^{\frac{\gamma}{2\gamma-1}}\mathrm{~d} r\nonumber\\
		&\leq Ch\int_{t_{i-1}}^{t_{i}}\left(1+\sup_{0 \leq t \leq t_i}\mathbb{E}\left[|Y(t)|^{2\gamma}\right]\right)|t_i-r|\mathrm{~d} r \nonumber \\
		&\quad +Ch\int_{t_{i-1}}^{t_{i}}\left(1+2\sup_{0 \leq t \leq t_i}\mathbb{E}\left[|Y(t)|^{4\gamma-2}\right]\right)^{\frac{\gamma-1}{2\gamma-1}}\left(\mathbb{E}\left[|Y(r)-Y(t_i)|^{\frac{4\gamma-2}{\gamma}}\right]\right)^{\frac{\gamma}{2\gamma-1}}\mathrm{~d} r. 
	\end{align}
	Since $p^\star \geq 4\gamma - 2$ and $\gamma > 1$, we have $p^\star>2\gamma$ and $1<\frac{4\gamma-2}{\gamma} <\frac{p^\star}{\gamma}$ . Thus, by applying Lemmas~\ref{ju} and \ref{jhd}, we obtain that
	\begin{align}\label{m1}
		M_1^i &\leq Ch \int_{t_{i-1}}^{t_{i}}\left(1+Ce^{Ct_i}\right)|t_i-r|\mathrm{~d} r+Ch\int_{t_{i-1}}^{t_{i}}\left(1+Ce^{Ct_i}\right)^{\frac{\gamma-1}{2\gamma-1}}Ce^{Cr}|r-t_i|\mathrm{~d} r\nonumber\\
		&\leq Ch\left(1+Ce^{Ct_i}\right)\int_{t_{i-1}}^{t_{i}}|t_i-r|\mathrm{~d}r\nonumber\\
		&\leq Ch^3\left(1+Ce^{Ct_i}\right).
	\end{align}
	By using the same arguments as in \eqref{M10} and \eqref{m1}, we obtain that
	\begin{equation}\label{m2}
		M_2^i\leq Ch^2\left(1+Ce^{Ct_i}\right).
	\end{equation}
	Substituting \eqref{m1} and \eqref{m2} into \eqref{ER} yields
	\begin{equation}\label{Ri}
		\mathbb{E}\left[|R_i|^2\right]\leq Ch^2\left(1+Ce^{Ct_i}\right).
	\end{equation}
	Moreover, to obtain an upper bound for $\mathbb{E}[|\mathbb{E}[R_i|\mathcal{F}_{t_{i-1}}]|^2]$, we note that
	\begin{small}
		$$
		\begin{aligned}
			&\mathbb{E}\left[R_i|\mathcal{F}_{t_{i-1}}\right]\nonumber\\
			&=\mathbb{E}\left[\int_{t_{i-1}}^{t_{i}}b(r,Y(r))-b(t_i,Y(t_i))\mathrm{~d} r+\int_{t_{i-1}}^{t_{i}}g(r,Y(r))-g(t_i,Y(t_i))\mathrm{~d} W(r)\mid\mathcal{F}_{t_{i-1}}\right]\\
			&=\mathbb{E}\left[\int_{t_{i-1}}^{t_{i}}b(r,Y(r))-b(t_i,Y(t_i))\mathrm{~d} r\mid\mathcal{F}_{t_{i-1}}\right].
		\end{aligned}   
		$$
	\end{small}
	Thus, by applying Jensen's inequality, Assumptions \ref{A1}, \ref{A4}, Lemmas~\ref{ju}, and \ref{jhd}, we obtain that
	\begin{align}\label{Erf}
		\mathbb{E}\left[\left|\mathbb{E}\left[R_i|\mathcal{F}_{t_{i-1}}\right]\right|^2\right]&=\mathbb{E}\left[\left|\mathbb{E}\left[\int_{t_{i-1}}^{t_{i}}b(r,Y(r))-b(t_i,Y(t_i))\mathrm{~d} r\mid\mathcal{F}_{t_{i-1}}\right]\right|^2\right]\nonumber\\
		&\leq \mathbb{E}\left[\mathbb{E}\left[\left|\int_{t_{i-1}}^{t_{i}}b(r,Y(r))-b(t_i,Y(t_i))\mathrm{~d} r\right|^2\mid\mathcal{F}_{t_{i-1}}\right]\right]\nonumber\\
		&=\mathbb{E}\left[\left|\int_{t_{i-1}}^{t_{i}}b(r,Y(r))-b(t_i,Y(t_i))\mathrm{~d} r\right|^2\right]\nonumber\\
		&\leq Ch^3\left(1+Ce^{Ct_i}\right).
	\end{align}
	By using \eqref{Ri} and \eqref{Erf}, we obtain that
	$$
	\begin{aligned}
		&\sum_{i=1}^{n}\mathbb{E}\left[R_i\right]^2+h^{-1}\sum_{i=1}^{n}\mathbb{E}\left[\left|\mathbb{E}\left[R_i\mid\mathcal{F}_{t_{i-1}}\right]\right|^2\right]
		\\&\leq\sum_{i=1}^{n} Ch^2\left(1+Ce^{Ct_i}\right)+h^{-1}\sum_{i=1}^{n}Ch^3\left(1+Ce^{Ct_i}\right)\\
		&\leq Cht_n\left(1+Ce^{Ct_n}\right)\\
		&\leq Che^{Ct_n},
	\end{aligned}
	$$
	where the second inequality holds due to $t_i\leq t_n$ for all $i\leq n$, and $x\leq e^x, x\in\mathbb{R}$. This completes the proof.
\end{proof}
The following lemma provides an upper bound for the projection error.
\begin{lemma}\label{x0}
	We have, for any $x\in\mathbb{R}^d$ and $m\geq0$, that
	$$
	|x-\kappa(x)|\leq2h^m|x|^{1+2m(\gamma-1)}.
	$$
\end{lemma}
\begin{proof}
	Recall the definition of $\kappa$ in \eqref{xq} with $h\in(0,1]$ and $\alpha=\frac{1}{2(\gamma-1)}$. If $|x|\leq h^{-\alpha}$, then $\kappa(x)=x$. This immediately proves the lemma.
	If $|x|> h^{-\alpha}$, then $|\kappa(x)|\leq h^{-\alpha}\leq |x|$. Thus, we obtain, for any $m\geq0$, that
	$$
	|x-\kappa(x)|\leq \mathbf{1}_{\{|x|> h^{-\alpha}\}}2\left|x\right|\leq2h^m\left|x\right|^{1+\frac{m}{\alpha}}=2h^m\left|x\right|^{1+2m(\gamma-1)}.
	$$
	Combining the two cases completes the proof.
\end{proof}
\begin{proof}[Proof of Lemma \ref{PEMSLJ}]
	Recalling the definition of $\Phi$ in \eqref{Phi}, it follows, for any $i\in\mathbb{N},$ that
	\begin{small}
		$$
		\begin{aligned}
			&\mathbb{E}\left[Y\left(t_i\right)-\Phi\left(Y\left(t_{i-1}\right), t_{i-1}, h\right) \mid \mathcal{F}_{t_{i-1}}\right]\nonumber\\
			&=\mathbb{E}\left[Y(t_i)-\kappa\left(Y(t_{i-1})\right)-hb\left(t_{i-1},\kappa(Y(t_{i-1}))\right)
			-\Delta W_{i-1} g(t_{i-1},\kappa(Y(t_{i-1})))\mid \mathcal{F}_{t_{i-1}}\right]\\
			&=\mathbb{E}\left[\int_{t_{i-1}}^{t_i} b(r,Y(r))-b(t_{i-1},\kappa(Y(t_{i-1})))\mathrm{~d} r+Y(t_{i-1})-\kappa(Y(t_{i-1}))\mid \mathcal{F}_{t_{i-1}}\right].
		\end{aligned}
		$$
	\end{small}
	By Jensen's inequality and the elementary inequality, we have
	\begin{small}
		\begin{align}\label{JJ}
			&\mathbb{E}\left[\left|\mathbb{E}\left[Y\left(t_i\right)-\Phi\left(Y\left(t_{i-1}\right), t_{i-1}, h\right) \mid \mathcal{F}_{t_{i-1}}\right]\right|^2\right]\nonumber\\
			&\leq\mathbb{E}\left[\left|\int_{t_{i-1}}^{t_i} b(r,Y(r))-b(t_{i-1},\kappa(Y(t_{i-1})))\mathrm{~d} r+Y(t_{i-1})-\kappa(Y(t_{i-1}))\right|^2\right]\nonumber\\
			&\leq2\mathbb{E}\left[\left|\int_{t_{i-1}}^{t_i} b(r,Y(r))-b(t_{i-1},\kappa(Y(t_{i-1})))\mathrm{~d} r\right|^2\right]+2\mathbb{E}\left[\left|Y(t_{i-1})-\kappa(Y(t_{i-1}))\right|^2\right]\nonumber\\
			&=:J^i_1+J^i_2.
		\end{align}
	\end{small}
	To upper bound $J^i_1$, the application of H\"{o}lder's inequality yields
	\begin{align}\label{J1}
		J^i_1
		\leq	& Ch\mathbb{E}\left[\int_{t_{i-1}}^{t_i} |b(r,Y(r))-b(t_{i-1},\kappa(Y(t_{i-1})))|^2\mathrm{~d} r\right]\nonumber\\
		\leq& Ch\mathbb{E}\left[\int_{t_{i-1}}^{t_i} |b(r,Y(r))-b(t_{i-1},Y(t_{i-1}))|^2\mathrm{~d} r\right]\nonumber\\\quad&+Ch^2\mathbb{E}\left[\left|b(t_{i-1},Y(t_{i-1}))-b(t_{i-1},\kappa(Y(t_{i-1})))\right|^2\right]\nonumber\\
		=	&:J^i_{11}+J^i_{12}.
	\end{align}
	We note that an upper bound for $J^i_{11}$ in \eqref{J1} can be obtained using the same argument as in \eqref{M10} and \eqref{m1}, hence, we obtain
	\begin{equation}\label{J11}
		J^i_{11} \leq C h^3e^{C t_i}.
	\end{equation}
	To estimate $J^i_{12}$ in \eqref{J1}, applying Assumption \ref{A1} and H\"{o}lder's inequality yields
	$$
	\begin{aligned}
		J^i_{12}&\leq Ch^2\mathbb{E}\left[\left(1+|Y(t_{i-1})|^{\gamma -1}+|\kappa(Y(t_{i-1})\right)|^{\gamma-1})^2\left|Y(t_{i-1})-\kappa(Y(t_{i-1}))\right|^2\right]\\
		&\leq Ch^2\left(\mathbb{E}\left[\left(1+|Y(t_{i-1})|^{\gamma-1}+|\kappa(Y(t_{i-1})\right)|^{\gamma-1})^\frac{4\gamma-2}{\gamma-1}\right]\right)^{\frac{\gamma-1}{2\gamma-1}}\nonumber\\&\quad\times\left(\mathbb{E}\left[\left|Y(t_{i-1})-\kappa(Y(t_{i-1}))\right|^{\frac{4\gamma-2}{\gamma}}\right]\right)^{\frac{\gamma}{2\gamma-1}}.
	\end{aligned}
	$$
	By using Lemmas \ref{ju} and \ref{x0} with $m=1/2$ and $x=Y(t_{i-1})$, and the fact that $|\kappa(x)|\leq |x|,\text{ for all } x\in \mathbb{R}^d$, we obtain that
	\begin{align}\label{J12}
		J^i_{12}&\leq Ch^3\left(1+2\mathbb{E}\left[|Y(t_{i-1})|^{4\gamma-2}\right]\right)^{\frac{\gamma-1}{2\gamma-1}}\left(\mathbb{E}\left[|Y(t_{i-1})|^{4\gamma-2}\right]\right)^{\frac{\gamma}{2\gamma-1}}\nonumber\\
		&\leq Ch^3\left(1+Ce^{Ct_{i-1}}\right)Ce^{Ct_{i-1}}\nonumber\\
		&\leq Ch^3e^{Ct_{i}}.
	\end{align}
	Substituting \eqref{J11} and \eqref{J12} into \eqref{J1} yields
	\begin{equation}\label{J1z}
		J^i_1\leq Ch^3e^{Ct_{i}}.
	\end{equation}
	To upper bound $J_2^i$, we apply Lemmas \ref{ju} and \ref{x0} with $m=3/2$ and $x=Y(t_{i-1})$ to obtain
	\begin{align}\label{J2}
		J^i_2=2\mathbb{E}\left[|Y(t_{i-1})-\kappa(Y(t_{i-1}))|^2\right]
		\leq Ch^3\mathbb{E}\left[|Y(t_{i-1})|^{6\gamma-4}\right]
		\leq Ch^3e^{Ct_i}.
	\end{align}
	Substituting \eqref{J1z} and \eqref{J2} into \eqref{JJ} yields the desired result in \eqref{X-P}.	
	
	Next, we prove the inequality \eqref{ID}.  Recall $\mathcal{P}_i(x):=x-\mathbb{E}\left[x\mid\mathcal{F}_{t_{i-1}}\right]$. By using the inequality
	$$
	\mathbb{E}\left[\left|\mathcal{P}_i(Y)\right|^2\right]\leq \mathbb{E}\left[\left|Y\right|^2\right],
	$$
	we obtain, for all $i\in\mathbb{N}$, that
	$$
	\begin{aligned}
		&\mathbb{E}\left[\left|\mathcal{P}_i\left(\left(Y\left(t_i\right)-\Phi\left(Y\left(t_{i-1}\right), t_{i-1}, h\right)\right)\right)\right|^2\right]\nonumber\\
		&\leq \mathbb{E}\left[\left|Y\left(t_i\right)-\Phi\left(Y\left(t_{i-1}\right), t_{i-1}, h\right)\right|^2\right]\\
		&=\mathbb{E}\left[\left| Y(t_{i-1})-\kappa(Y(t_{i-1}))+\int_{t_{i-1}}^{t_i}b(r,Y(r))-b\left(t_{i-1},\kappa(Y(t_{i-1}))\right)\mathrm{~d} r \right.\right.\\
		&\quad+\left. \left.\int_{t_{i-1}}^{t_i}g(r,Y(r))-g(t_{i-1},\kappa(Y(t_{i-1})))\mathrm{~d} W(r)\right|^2\right].
	\end{aligned}
	$$
	By using the elementary inequality and It\^{o}'s isometry, we have that
	\begin{align}\label{Q}
		&\mathbb{E}\left[\left|\mathcal{P}_i(\left(Y\left(t_i\right)-\Phi\left(Y\left(t_{i-1}\right), t_{i-1}, h\right)\right))\right|^2\right]\nonumber\\
		&\leq C\mathbb{E}\left[|Y(t_{i-1})-\kappa(Y(t_{i-1}))|^2\right]\nonumber\\
		&\quad +C\mathbb{E}\left[\left|\int_{t_{i-1}}^{t_i}b(r,Y(r))-b(t_{i-1},\kappa(Y(t_{i-1})))\mathrm{~d} r\right|^2\right]\nonumber\\
		&\quad+C\mathbb{E}\left[\left|\int_{t_{i-1}}^{t_i}g(r,Y(r))-g(t_{i-1},\kappa(Y(t_{i-1})))\mathrm{~d} W(r)\right|^2\right]\nonumber\\
		&=C\mathbb{E}\left[|Y(t_{i-1})-\kappa(Y(t_{i-1}))|^2\right]\nonumber\\&\quad+C\mathbb{E}\left[\left|\int_{t_{i-1}}^{t_i}b(r,Y(r))-b(t_{i-1},\kappa(Y(t_{i-1})))\mathrm{~d} r\right|^2\right].
	\end{align}
	By applying the same arguments as those in \eqref{JJ}-\eqref{J12}, we obtain, for any $i\in\mathbb{N}$, that
	$$
	\mathbb{E}\left[\left|\mathcal{P}_i\left(\left(Y\left(t_i\right)-\Phi\left(Y\left(t_{i-1}\right), t_{i-1}, h\right)\right)\right)\right|^2\right]\leq Ch^2e^{Ct_i}.
	$$
	This completes the proof.
\end{proof}
\end{appendices}
\bibliography{sn-bibliography}

\end{document}